\definecolor{black}{rgb}{0.0, 0.0, 0.0}
\definecolor{red}{rgb}{1.0, 0.5, 0.5}
\newcommand{\margnote}[1]{
\ifthenelse{\boolean{shownotes}}%
{\marginpar{\raggedright\tiny\texttt{#1}}}%
{}%
}
\newcommand{\hole}[1]{
\ifthenelse{\boolean{shownotes}}%
{\begin{center} \fbox{ \rule {.25cm}{0cm} \rule[-.1cm]{0cm}{.4cm}
\parbox{.85\textwidth}{\begin{center} \texttt{#1}\end{center}} \rule
{.25cm}{0cm}}\end{center}} {} }
\title[On the Cauchy problem for the pressureless Euler--Navier--Stokes system]{On the Cauchy problem for the pressureless Euler--Navier--Stokes system in the whole space}
\author[Choi]{Young-Pil Choi}
\address[Young-Pil Choi]{\newline Department of Mathematics\newline
Yonsei University, 50 Yonsei-Ro, Seodaemun-Gu, Seoul 03722, Republic of Korea}
\email{ypchoi@yonsei.ac.kr}
\author[Jung]{Jinwook Jung}
\address[Jinwook Jung]{\newline Research Institute of Basic Sciences \newline Seoul National University, Seoul  08826, Republic of Korea}
\email{warp100@snu.ac.kr}
\numberwithin{equation}{section}
\newtheorem{theorem}{Theorem}[section]
\newtheorem{lemma}{Lemma}[section]
\newtheorem{corollary}{Corollary}[section]
\newtheorem{proposition}{Proposition}[section]
\newtheorem{remark}{Remark}[section]
\newcommand{\R}{\mathbb R}
\newcommand{\N}{\mathbb N}
\newcommand{\ls}{\lesssim}
\newcommand{\T}{\mathbb T}
\newcommand{\mc}{\mathcal C}
\newcommand{\bq}{\begin{equation}}
\newcommand{\eq}{\end{equation}}
\newcommand{\e}{\varepsilon}
\newcommand{\lt}{\left}
\newcommand{\rt}{\right}
\newcommand{\pa}{\partial}
\newcommand{\intr}{\int_{\R^d}}
\begin{document}
\allowdisplaybreaks

\date{\today}

\keywords{Cauchy problem, classical solutions, pressureless Euler equations, incompressible Navier--Stokes equations, large-time behavior.}

\begin{abstract} In this paper, we study the global Cauchy problem for a two-phase fluid model consisting of the pressureless Euler equations and the incompressible Navier--Stokes equations where the coupling of two equations is through the drag force. We establish the global-in-time existence and uniqueness of classical solutions for that system when the initial data are sufficiently small and regular. Main difficulties arise in the absence of pressure in the Euler equations. In order to resolve it, we properly combine the large-time behavior of classical solutions and the bootstrapping argument to construct the global-in-time unique classical solutions.
\end{abstract}

\maketitle \centerline{\date}


%
%
%
%
\section{Introduction}\label{sec:1}
\setcounter{equation}{0}
In the present work, we are interested in the global-in-time existence and uniqueness of classical solutions and its large-time behavior for a coupled hydrodynamic system in the whole space. More precisely, the system of our interest consists of the pressureless Euler equations and incompressible Navier--Stokes equations (in short, pressureless ENS system), which are coupled via the drag force:
\begin{align}
\begin{aligned}\label{A-1}
&\partial_t \rho + \nabla_x \cdot (\rho u) = 0,\quad x \in \R^d, \ t > 0,\\
&\partial_t (\rho u) + \nabla_x \cdot (\rho u \otimes u) = -\rho( u-v),\\
&\partial_t v + (v \cdot\nabla_x )v + \nabla_x p - \mu\Delta_x v = \rho(u-v),\\
&\nabla_x \cdot v =0,
\end{aligned}
\end{align}
subject to initial data:
\bq\label{A-1_ini}
(\rho(x,0), u(x,0), v(x,0)) = (\rho_0(x), u_0(x), v_0(x)), \quad x\in\R^d.
\eq
Here $\rho = \rho(x,t)$ and $u = u(x,t)$ represent the fluid density and velocity for the pressureless flow at a domain $(x,t) \in \R^d \times \R_+$, respectively, and $v = v(x,t)$ denote the fluid velocity for the incompressible flow. $\mu > 0$ is the viscosity coefficient, and throughout this paper, we set $\mu = 1$ for simplicity. 

Our main system is closely related to the kinetic-fluid models, in general multiphase flows, which have received increasing attention due to its wide range of applications, for instances, including medicine, biotechnology, combustion in diesel engines, and atmospheric pollution \cite{BBJM05,BDM03,OR81,VASG06,Wi58}. To be more specific, at the formal level, the pressureless ENS system \eqref{A-1} can be derived from the kinetic-fluid system consisting of Vlasov--Navier--Stokes system with a strong local alignment force. Here we briefly outline the formal derivation. Let $f = f(x,\xi,t)$ be the number density function of dispersed particles in phase space $(x,\xi) \in \R^d \times \R^d$ at time $t \in \R_+$ and $u = u(x,t)$ be the velocity of the incompressible flow. We then consider
\begin{align}\label{kin_fl}
\begin{aligned}
&\pa_t f^\e + \xi \cdot \nabla_x f^\e + \nabla_\xi \cdot \lt((u^\e-\xi)f^\e\rt) = \frac1\e \nabla_\xi \cdot \lt( (\xi-u^\e)f^\e \rt),\cr
&\partial_t v^\e + (v^\e \cdot\nabla_x )v^\e + \nabla_x p^\e -\Delta_x v^\e = \rho^\e(u^\e-v^\e),\\
&\nabla_x \cdot v^\e =0,
\end{aligned}
\end{align}
where $\rho^\e = \rho^\e(x,t)$ and $\rho^\e u^\e = (\rho^\e u^\e)(x,t)$ are local particle density and moment given by
\[
\rho^\e = \intr f\,d\xi \quad \mbox{and} \quad \rho^\e u^\e = \intr \xi f\,d\xi,
\]
respectively. Here the term on the right hand side of the kinetic equation in \eqref{kin_fl} is often called the local alignment force; it produces the dissipation term for the kinetic energy. Moreover, at the formal, it follows from the kinetic equation in \eqref{kin_fl} that $\rho^\e$ and $\rho^\e u^\e$ satisfy
$$\begin{aligned}
&\pa_t \rho^\e + \nabla_x \cdot (\rho^\e u^\e) = 0,\cr
&\pa_t (\rho^\e u^\e) + \nabla_x \cdot \lt(\rho^\e u^\e\otimes u^\e \rt) + \nabla_x \cdot \lt(\intr (\xi - u^\e)\otimes (\xi - u^\e) f\,d\xi \rt) = - \rho^\e(u^\e-v^\e),\cr
&\partial_t v^\e + (v^\e \cdot\nabla_x )v^\e + \nabla_x p^\e -\Delta_x v^\e = \rho^\e(u^\e-v^\e),\\
&\nabla \cdot v^\e =0.
\end{aligned}$$
Even though the above system is not closed, under the strong local alignment regime, i.e., $\e \ll 1$, we have the monokinetic ansatz from \eqref{kin_fl}:
\[
f^\e(x,v,t)\,dxdv \simeq \rho^\e(x,t)\,dx \,\otimes \delta_{u^\e(x,t)}(dv).
\]
This formal observation leads to the pressureless ENS system \eqref{A-1}. There have been some results concerning hydrodynamic limit from the kinetic-fluid models to two-phase fluid systems. In \cite{CCK16,CJ20,CJpre,GJV04, GJV04_2,MV08}, the asymptotic analysis for the Vlasov--Fokker--Planck equation coupled with the incompressible/compressible Navier--Stokes equations are discussed. In these works, the relative entropy method is used, and thus the presence of the diffusion term in the kinetic equation is important. This gives the convexity of the total macroscopic energy and enables us to handle the strong coupling between kinetic and fluid equations. More recently, in \cite{CJpre2}, the hydrodynamic limit for the Vlasov--Poisson--Navier--Stokes equations is investigated and the pressureless Euler--Poisson--Navier--Stokes equations are rigorously derived under the strong local alignment force regime. 

The global existence of classical solutions and its large-time behavior for the pressureless Euler-type system coupled with the incompressible/compressible Navier--Stokes system are investigated in \cite{CK16, HKK14} in the periodic domain $\T^d$. Due to the absence of the pressure term in the Euler system, it is not clear to estimate the fluid density in the desired Sobolev space. In fact, it is well-known that the pressureless Euler-type equations develop a finite-time formation of singularities, for instance $\delta$-shock \cite{CCTT16,CCZ16,CW96,Eng96,HKK14,LT02}. In order to resolve it, the large-time behavior estimate combined with the a priori estimate is used in those works. To be more specific, by introducing a Lyapunov functional, the exponential decay of $L^2$-norm of the pressureless fluid velocity $u$ can be obtained. This together with an appropriate higher-order $L^2$-Sobolev norm of $u$ asserts the exponential decay of $\|\nabla_x u\|_{H^s(\T^d)}$ in time. From this, we can bound the fluid density $\rho$ from both above and below by some positive constant, which is independent of time. This requires a rather different regularity for $\rho$ and $u$ for instance, $\rho \in \mc([0,+\infty); H^s(\T^d))$ and $u \in \mc([0,+\infty); H^{s+2}(\T^d))$ for some $s > 0$. However, in this strategy, the Poincar\'e inequality is crucially used, and thus it is not clear to extend this idea to Cauchy problems in the whole space. On the other hand, for the coupled isothermal Euler and incompressible/compressible Navier--Stokes system, the large-time behavior estimate is not necessarily required for the global-in-time regularity \cite{Choi15,Choi16}.

The main purpose of the current work is to develop a global existence theory for the pressureless ENS system \eqref{A-1}. We employ similar ideas to that for the periodic domain case \cite{CK16, HKK14} to construct the global-in-time classical solutions. We use a recent work \cite{Hpre}, where the large-time behavior for the Vlasov--Navier--Stokes system in the whole space is discussed, to have the $L^2$-decay estimate of solutions in the whole space. In particular, this shows a polynomial decay of kinetic energies for each system in \eqref{A-1}. This combined with our careful analysis enables us to have the decay estimate in higher-order Sobolev spaces; we obtain the same decay rate as the lower order estimate. Here, a proper combination of the drag forcing effect and the smoothing effect from the viscosity  in the Navier--Stokes system is significantly used. Although we can not expect the exponential decay for our system in the whole space but only the polynomial decay rate, it is enough to have the uniform-in-time bound estimates for the fluid density. Combining these estimates and the standard bootstrapping argument gives the global-in-time existence and the large-time behavior of classical solutions to the system \eqref{A-1}.

More precisely, we state our main theorem. 

\begin{theorem}\label{T2.1}
Let $d \ge 3$ and $s \geq 2[d/2]\footnote{$[\,\cdot\,]$ represents the floor function, i.e., $[x]$ is the greatest integer less than or equal to $x \in \R$.}+1$. Suppose that the initial data $(\rho_0, u_0, v_0)$ satisfy
\begin{itemize}
\item[(i)] $\rho_0(x)>0$ for every $x\in\R^d$ and 
\item[(ii)] $(\rho_0, u_0, v_0) \in (L^1 \cap H^s)(\R^d) \times H^{s+2}(\R^d) \cap (L^1 \cap H^{s+1})(\R^d)$.
\end{itemize}
If 
\[
\|\rho_0\|_{H^s(\R^d)} + \|u_0\|_{H^{s+2}(\R^d)} + \|v_0\|_{H^{s+1}(\R^d)} + \|v_0\|_{L^1(\R^d)}<\e_0
\]
for $\e_0$ sufficiently small, the Cauchy problem \eqref{A-1}-\eqref{A-1_ini} has a unique global classical solution $(\rho, u, v) \in \mc([0,+\infty);H^s(\R^d)) \times \mathcal{C}([0,+\infty);H^{s+2}(\R^d)) \times \mathcal{C}([0,+\infty);H^{s+1}(\R^d))$ satisfying $\rho(x,t) > 0$ for all $(x,t) \in \R^d \times [0, + \infty)$ and
\[
\sup_{t \ge 0}\lt( \|\rho(\cdot,t)\|_{H^s(\R^d)} + \|u(\cdot,t)\|_{H^{s+2}(\R^d)} + \|v(\cdot,t)\|_{H^{s+1}(\R^d)} \rt) <\infty.
\]
Moreover, for every $\alpha \in (0,d/2)$ there exists a constant $C>0$ independent of $t$ such that
\[
\|u(\cdot,t)\|_{H^{s+1}(\R^d)}^2 + \|v(\cdot,t)\|_{H^s(\R^d)}^2 \le \frac{C}{(1+t)^\alpha}\quad \forall \, t \ge 0.
\]
\end{theorem}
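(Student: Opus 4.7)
\emph{Proof plan.} My plan is to combine a standard local well-posedness theorem with a global-in-time a priori estimate that exploits a polynomial decay of lower-order Sobolev norms of $(u,v)$ to compensate for the absence of pressure in the Euler block. The architecture parallels the torus analyses of \cite{CK16,HKK14}, but the exponential decay used there (via Poincar\'e) is replaced by the polynomial decay obtained from heat-kernel smoothing of the Navier--Stokes component in the whole space, in the spirit of \cite{Hpre}; the assumption $d\ge 3$ is precisely what makes this polynomial decay fast enough to control the density. A local classical solution in the stated regularity classes is constructed by standard means (Friedrichs symmetrizer / Picard iteration for the hyperbolic $(\rho,u)$ block coupled to heat-semigroup arguments for the parabolic $v$-block), with positivity of $\rho$ preserved locally by the Lagrangian representation of the continuity equation. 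A Beale--Kato--Majda-type continuation criterion then reduces global existence to a uniform-in-time bound of
\[
\mathcal{X}(t) := \|\rho(t)\|_{H^s} + \|u(t)\|_{H^{s+2}} + \|v(t)\|_{H^{s+1}}.
\]

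Testing the momentum equation against $u$ and the Navier--Stokes equation against $v$, one obtains the basic dissipation identity
\[
\tfrac{d}{dt}\Bigl(\tfrac12\!\int \rho|u|^2 + \tfrac12\!\int|v|^2\Bigr) + \int \rho|u-v|^2 + \int |\nabla v|^2 = 0,
\]
so that the total kinetic energy is monotone nonincreasing. Propagating the $L^1$-bound on $v$ through the Navier--Stokes equation (using the a priori smallness of the drag source) and invoking a Schonbek-type Fourier-splitting argument as in \cite{Hpre}, I would upgrade this to the polynomial decay $\|u(t)\|_{L^2}^2 + \|v(t)\|_{L^2}^2 \lesssim (1+t)^{-\alpha}$ for every $\alpha \in (0, d/2)$.

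For the top-order norms I would use standard Moser-type commutator and energy estimates, which under the bootstrap hypothesis $\mathcal{X}(t) \le C_0 \e_0$ furnish a differential inequality of the form
\[
\tfrac{d}{dt}\mathcal{X}(t)^2 + \|\nabla v\|_{H^{s+1}}^2 \lesssim \bigl(\|\nabla u\|_{L^\infty} + \|\nabla v\|_{L^\infty}\bigr)\mathcal{X}(t)^2.
\]
Interpolating the $L^2$-decay against this uniform high-Sobolev bound upgrades the decay to $\|u(t)\|_{H^{s+1}}^2 + \|v(t)\|_{H^s}^2 \lesssim (1+t)^{-\alpha}$, and Sobolev embedding (permitted by $s \ge 2[d/2]+1$) turns this into fast enough decay of $\|\nabla u\|_{L^\infty}$ that $\int_0^\infty \|\nabla u(t)\|_{L^\infty}\,dt$ is finite. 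The Lagrangian representation of the continuity equation then delivers uniform two-sided bounds on $\rho$, hence on $\|\rho\|_{H^s}$, and closes the bootstrap $\mathcal{X}(t) \le C_0 \e_0$ on the maximal existence interval---forcing $T^* = +\infty$ and simultaneously giving the claimed decay rates.

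The main obstacle, in my view, is this last interpolation: in low dimensions ($d=3,4$) the admissible rate $\alpha < d/2$ sits close to the threshold at which $\int_0^\infty \|\nabla u(t)\|_{L^\infty}\,dt$ ceases to converge, so one cannot afford to lose powers of $t$ when transferring the slow low-order decay to the uniformly-bounded high-order norms. Balancing the drag-induced dissipation of $\int\rho|u-v|^2$ against the viscous smoothing of $v$, while propagating density information through the pressureless hyperbolic block, is where the technical work concentrates---precisely the point already flagged by the authors in the introduction.
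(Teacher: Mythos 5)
Your overall architecture (local existence, a Schonbek-type Fourier splitting for the $L^2$ decay, a bootstrap closed by integrating $\|\nabla u(t)\|_{L^\infty}$ in time to control the density along characteristics) matches the paper's. But the step you lean on to connect the two ends --- ``interpolating the $L^2$-decay against the uniform high-Sobolev bound'' to get decay of $\|u\|_{H^{s+1}}$ and hence integrability of $\|\nabla u(t)\|_{L^\infty}$ --- fails quantitatively, and you supply no substitute. Interpolating $\|u(t)\|_{L^2}^2\lesssim (1+t)^{-\alpha}$ against $\sup_t\|u(t)\|_{H^{s+2}}\lesssim \e_0$ gives $\|\nabla u(t)\|_{L^\infty}\lesssim\|u(t)\|_{H^{[d/2]+2}}\lesssim (1+t)^{-\alpha\theta/2}$ with $\theta=1-([d/2]+2)/(s+2)<1$; for $d=3$ and the minimal $s=3$ this is $\theta=2/5$ and $\alpha<3/2$, so the exponent is below $3/10$ and $\int_0^\infty\|\nabla u(t)\|_{L^\infty}\,dt$ diverges. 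You explicitly flag this as ``the main obstacle,'' but naming the obstacle is not overcoming it: without that integrability your top-order inequality only yields $\mathcal{X}(t)^2\le\mathcal{X}(0)^2\exp\bigl(C\int_0^t(\|\nabla u\|_{L^\infty}+\|\nabla v\|_{L^\infty})\,d\tau\bigr)$, which does not close the bootstrap. There is also a latent circularity: the uniform high-order bound you interpolate against is itself what the bootstrap is supposed to produce.

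The paper's resolution is to avoid interpolation entirely and to propagate the time weight $(1+t)^r$ through the higher-order energy estimates one derivative at a time (Lemmas \ref{L3.3}--\ref{L3.4} and Corollary \ref{C3.1}). Two structural facts make this lossless: (i) the drag term makes the $u$-equation damped, so $\frac{d}{dt}\|(1+t)^r\nabla^k u\|_{L^2}^2\le -c\|(1+t)^r\nabla^k u\|_{L^2}^2+C\|(1+t)^r\nabla^k v\|_{L^2}^2$ for any fixed $r<3/4$, the term $r(1+t)^{-1}\|\cdot\|_{L^2}^2$ produced by differentiating the weight being absorbed by the damping; and (ii) the viscosity converts time-integrated control of $(1+t)^r\nabla^k v$ into pointwise-in-time control of $(1+t)^r\nabla^k v$ plus time-integrated control of $(1+t)^r\nabla^{k+1}v$. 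Seeding this cascade with Proposition \ref{P3.1} (which gives $\int_0^\infty(1+\tau)^{2r}\|\nabla v\|_{L^2}^2\,d\tau<\infty$ for $2r<d/2$) and alternating (i) and (ii) yields $\sup_t(1+t)^{2r}\|\nabla u(t)\|_{H^s}^2+\int_0^\infty(1+\tau)^{2r}\|\nabla u(\tau)\|_{H^s}^2\,d\tau\lesssim\mathfrak{X}_0(s)+\|v_0\|_{L^1}^2$ with the \emph{same} rate at every derivative order. The density is then handled by Cauchy--Schwarz, $\int_0^\infty\|\nabla u\|_{H^s}\,dt\le\bigl(\int(1+\tau)^{2r}\|\nabla u\|_{H^s}^2\,d\tau\bigr)^{1/2}\bigl(\int(1+\tau)^{-2r}\,d\tau\bigr)^{1/2}$, which is finite precisely because $r$ may be chosen in $(1/2,3/4)$ --- this is where $d\ge3$ enters quantitatively. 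To repair your argument you must replace the interpolation step with such a weighted cascade (or an equivalent device that transfers the full low-order decay rate to top order without loss); note also that the $H^s$ bound on $\rho$ needs $\int_0^\infty\|\nabla u\|_{H^s}\,dt$, not merely $\int_0^\infty\|\nabla u\|_{L^\infty}\,dt$.
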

\begin{remark} Due to the absence of the pressure in Euler equation in \eqref{A-1}, we can not have time decay estimate of the fluid density $\rho$.
\end{remark}

The rest of this paper is organized as follows. In Section \ref{sec:pre}, we briefly present some useful Sobolev inequalities and a priori energy estimates for the system \eqref{A-1}. We also state the local-in-time existence theory which can be established by the standard arguments developed for conservation laws types. Section \ref{sec:ap} is devoted to present the time decay estimate of the total energy for the system \eqref{A-1}. As mentioned before, we use the strategy recently developed in \cite{Hpre}. We slightly modify the time behavior estimate to apply it to our main system \eqref{A-1}. Finally, in Section \ref{sec:main}, we provide the a priori estimates of solutions in the weighted Sobolev spaces by $(1+t)^r$ with some $r > 0$. These estimates yield that the local-in-time classical solutions can be extended to the global ones. Using these global solutions, we refine the weighted Sobolev space estimates to establish the large-time behavior estimates with the desired polynomial decay rate. This proves Theorem \ref{T2.1}. 

Before closing this section, we introduce several notations used throughout this paper. For a function $f=f(x)$, $\|f\|_{L^p}$ represents the usual $L^p(\R^d)$-norm. For simplicity, we omit $x$-dependence of differential operators, i.e., $\nabla f:= \nabla_x f$ and $\Delta f := \Delta_x f$. We denote by $C$ a generic positive constant and it may differ from line to line. Finally, $f \ls g$ represents that there exists a positive constant $C>0$ such that $f \leq Cg$.

%
%
%
\section{Preliminaries}\label{sec:pre}
In this section, we provide useful Sobolev inequalities and conservation laws for the pressureless ENS system \eqref{A-1} that will be significantly used later. We also state the local-in-time existence and uniqueness theorem.

We first recall the Moser-type inequalities.
\begin{lemma}\label{lem_moser} 
\begin{itemize}
\item[(i)] For any pair of functions $f,g \in (H^k \cap L^\infty)(\R^d)$, we obtain
\[
\|\nabla^k (fg)\|_{L^2} \le C\lt(\|f\|_{L^\infty} \|\nabla^k g\|_{L^2} + \|\nabla^k f\|_{L^2}\|g\|_{L^\infty}\rt).
\]
Furthermore, if $\nabla f \in L^\infty(\R^d)$, we have
\[
\|\nabla^k(fg) - f\nabla^k g\|_{L^2} \le C\lt(\|\nabla f\|_{L^\infty}\|\nabla^{k-1} g\|_{L^2} + \|g\|_{L^\infty}\|\nabla^k f\|_{L^2}\rt).
\]
Here $C>0$ only depends on $k$ and $d$.

\item[(ii)] For $f \in H^{[d/2]+1}(\R^d)$, we have
\[
\|f\|_{L^\infty} \leq C\|\nabla f\|_{H^{[d/2]}}.
\]
\end{itemize}
\end{lemma}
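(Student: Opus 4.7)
The plan is to prove (i) by a Leibniz expansion combined with Gagliardo--Nirenberg interpolation, and (ii) by a Fourier-side splitting into low and high frequencies.

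For the first inequality in (i), I would expand
\[
\nabla^k(fg)=\sum_{|\alpha|+|\beta|=k}\binom{k}{\alpha}\nabla^\alpha f\,\nabla^\beta g
\]
and isolate the endpoint terms $|\alpha|=0$ and $|\beta|=0$, which contribute exactly $f\nabla^k g$ and $(\nabla^k f)g$ and are estimated in $L^2$ by the two terms on the right-hand side. For intermediate $1\le|\alpha|\le k-1$, H\"older with conjugate exponents $p=2k/|\alpha|$ and $q=2k/|\beta|$ gives
\[
\|\nabla^\alpha f\,\nabla^\beta g\|_{L^2}\le\|\nabla^\alpha f\|_{L^p}\|\nabla^\beta g\|_{L^q},
\]
and the Gagliardo--Nirenberg inequality supplies
\[
\|\nabla^\alpha f\|_{L^{2k/|\alpha|}}\le C\|f\|_{L^\infty}^{1-|\alpha|/k}\|\nabla^k f\|_{L^2}^{|\alpha|/k},
\]
together with its symmetric analogue for $g$. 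Young's inequality with exponents $k/|\alpha|$ and $k/|\beta|$ then combines everything into $\|f\|_{L^\infty}\|\nabla^k g\|_{L^2}+\|\nabla^k f\|_{L^2}\|g\|_{L^\infty}$, proving the first inequality.

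For the commutator bound, note that $\nabla^k(fg)-f\nabla^k g$ is the same Leibniz sum with the $|\alpha|=0$ term dropped, so every remaining summand has $|\alpha|\ge 1$. I would apply Gagliardo--Nirenberg to $\nabla^\alpha f$ via the pair $(\|\nabla f\|_{L^\infty},\|\nabla^k f\|_{L^2})$, obtaining
\[
\|\nabla^\alpha f\|_{L^{2(k-1)/(|\alpha|-1)}}\le C\|\nabla f\|_{L^\infty}^{(k-|\alpha|)/(k-1)}\|\nabla^k f\|_{L^2}^{(|\alpha|-1)/(k-1)},
\]
and pair it through H\"older with the Gagliardo--Nirenberg estimate for $\nabla^\beta g$ built on $(\|g\|_{L^\infty},\|\nabla^{k-1}g\|_{L^2})$; the matching $L^q$ exponent is $2(k-1)/|\beta|$, and the two reciprocals sum to $1/2$ since $(|\alpha|-1)+|\beta|=k-1$. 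A Young's-inequality step with conjugate exponents $(k-1)/(|\alpha|-1)$ and $(k-1)/(k-|\alpha|)$ then produces the desired bound $\|\nabla f\|_{L^\infty}\|\nabla^{k-1}g\|_{L^2}+\|g\|_{L^\infty}\|\nabla^k f\|_{L^2}$. The endpoint contributions $|\alpha|=1$ and $|\alpha|=k$ are handled directly by H\"older without any interpolation.

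For part (ii), I would pass to the Fourier side and use $\|f\|_{L^\infty}\le C\|\widehat f\|_{L^1}$, then split
\[
\|\widehat f\|_{L^1}=\int_{|\xi|\le 1}|\widehat f(\xi)|\,d\xi+\int_{|\xi|>1}|\widehat f(\xi)|\,d\xi.
\]
For the high-frequency part, Cauchy--Schwarz gives
\[
\int_{|\xi|>1}|\widehat f|\,d\xi\le\left(\int_{|\xi|>1}|\xi|^{-2([d/2]+1)}\,d\xi\right)^{1/2}\left(\int|\xi|^{2([d/2]+1)}|\widehat f|^2\,d\xi\right)^{1/2},
\]
and the weight is integrable at infinity because $2([d/2]+1)>d$, while the second factor is comparable to $\|\nabla^{[d/2]+1}f\|_{L^2}$. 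For the low-frequency part, writing $|\widehat f|=|\xi||\widehat f|\cdot|\xi|^{-1}$ and applying Cauchy--Schwarz yields
\[
\int_{|\xi|\le 1}|\widehat f|\,d\xi\le\left(\int_{|\xi|\le 1}|\xi|^{-2}\,d\xi\right)^{1/2}\|\nabla f\|_{L^2},
\]
and the near-origin integral converges precisely because of the standing hypothesis $d\ge 3$ from Theorem \ref{T2.1}. Summing the two estimates gives $\|f\|_{L^\infty}\le C\|\nabla f\|_{H^{[d/2]}}$.

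The main obstacle is the bookkeeping of Gagliardo--Nirenberg exponents and the paired H\"older indices in the commutator step, where a derivative has been peeled off from the $f$ factor and the exponent $2(k-1)/(|\alpha|-1)$ no longer matches the natural scale $2k/|\alpha|$; once this matching is pinned down, the rest is routine Young's-inequality manipulation. In (ii), the only delicate point is that the low-frequency control is not available for general $d$ and hinges on the restriction $d\ge 3$ of the main theorem.
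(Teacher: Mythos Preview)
The paper does not prove this lemma at all: it is stated as a recalled standard fact (``We first recall the Moser-type inequalities'') and left without proof. Your proposal therefore goes beyond what the paper supplies.

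Your argument is correct and follows the classical route. In (i) the Leibniz expansion combined with the Gagliardo--Nirenberg interpolation
\[
\|\nabla^j h\|_{L^{2m/j}}\le C\|h\|_{L^\infty}^{1-j/m}\|\nabla^m h\|_{L^2}^{j/m}
\]
and the H\"older/Young bookkeeping you describe is exactly the standard proof of the Moser product and commutator estimates; your exponent matching $(|\alpha|-1)+|\beta|=k-1$ in the commutator step is the right one. In (ii) your Fourier splitting is also correct, and your observation that the low-frequency integral $\int_{|\xi|\le1}|\xi|^{-2}\,d\xi$ forces $d\ge3$ is pertinent: as literally stated the inequality $\|f\|_{L^\infty}\le C\|\nabla f\|_{H^{[d/2]}}$ fails for $d\le2$ (in $d=2$ it would amount to the false embedding $\dot H^1\hookrightarrow L^\infty$), so the lemma implicitly relies on the standing hypothesis $d\ge3$ of Theorem~\ref{T2.1}. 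The paper does not flag this, but your reading is accurate.
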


We next provide estimates of the mass and the total momentum, and the energy dissipation of the system \eqref{A-1}. Since the proof is almost the same as \cite[Lemma 2.1]{HKK14}, see also \cite{Choi15,Choi16,Choi16_2}, we omit it here.
\begin{lemma}\label{lem_energy} Let $(\rho,u,v)$ be a solution to the system \eqref{A-1} with sufficient integrability. Then we have
\begin{itemize}
\item[(i)] The total mass of $\rho$ is conserved in time:
\[
\intr \rho(x,t)\,dx = \intr \rho_0(x)\,dx \quad \forall \, t\geq0.
\]
\item[(ii)] The total momentum is conserved in time:
\[
\intr (\rho u)(x,t)\,dx + \intr v(x,t)\,dx = \intr (\rho_0 u_0)(x)\,dx + \intr v_0(x)\,dx\quad \forall \, t\geq0.
\]
\item[(iii)] The total energy is not increasing in time:
$$\begin{aligned}
&\frac12 \intr \rho(x,t)|u(x,t)|^2\,dx + \frac12\intr |v(x,t)|^2\,dx \cr
&\quad + \int_0^t\lt( \intr |\nabla v(x,\tau)|^2\,dx + \intr \rho(x,\tau)|(u-v)(x,\tau)|^2\,dx\rt)d\tau \cr
&\qquad = \frac12 \intr \rho_0(x)|u_0(x)|^2\,dx + \frac12\intr |v_0(x)|^2\,dx\quad \forall \, t\geq0.
\end{aligned}$$
\end{itemize}
\end{lemma}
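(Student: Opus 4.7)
The plan is to derive (i)--(iii) by standard energy/integral manipulations on the equations in \eqref{A-1}, using the assumed integrability to justify every integration by parts and discard all boundary terms at infinity. No auxiliary construction is needed; everything follows directly from testing the three evolution equations against well chosen quantities.

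For (i), I would integrate the continuity equation $\partial_t \rho + \nabla \cdot(\rho u) = 0$ over $\R^d$. The flux $\nabla\cdot(\rho u)$ integrates to zero, so $\frac{d}{dt}\intr \rho\,dx = 0$, and integrating in time gives the stated conservation of total mass.

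For (ii), I would integrate the $\rho u$-equation and the Navier--Stokes equation for $v$ separately over $\R^d$. The convective divergence $\nabla\cdot(\rho u\otimes u)$ vanishes, and $(v\cdot\nabla)v$ can be rewritten as $\nabla\cdot(v\otimes v)$ via $\nabla\cdot v=0$, so it also vanishes; the pressure gradient $\nabla p$ and the Laplacian $\Delta v$ integrate to zero on $\R^d$ as well. Summing the two integrated equations, the drag terms $-\rho(u-v)$ and $+\rho(u-v)$ cancel exactly, leaving $\frac{d}{dt}\bigl(\intr \rho u\,dx + \intr v\,dx\bigr)=0$, which gives (ii) after integrating in time.

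For (iii), the plan is a pair of $L^2$ energy tests. Multiplying the $\rho u$-equation by $u$ and using the continuity equation in the standard way yields $\frac12\frac{d}{dt}\intr \rho|u|^2\,dx = -\intr \rho u\cdot(u-v)\,dx$. Testing the Navier--Stokes equation against $v$, the convective term $(v\cdot\nabla)v\cdot v$ and the pressure term $v\cdot\nabla p$ integrate to zero using $\nabla\cdot v=0$, while $-\intr v\cdot\Delta v\,dx = \intr|\nabla v|^2\,dx$, producing $\frac12\frac{d}{dt}\intr|v|^2\,dx + \intr|\nabla v|^2\,dx = \intr \rho v\cdot(u-v)\,dx$. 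Adding the two identities, the right-hand sides combine as $-\intr \rho(u-v)\cdot(u-v)\,dx = -\intr \rho|u-v|^2\,dx$, giving the differential form of (iii); integrating from $0$ to $t$ yields the stated equality.

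The main (admittedly minor) obstacle is justifying the boundary terms at infinity and the interchange of $\partial_t$ with spatial integration; this is exactly the content of \emph{``sufficient integrability''} in the hypothesis, and is automatic within the regularity framework in which Theorem~\ref{T2.1} will be applied. Beyond that, the only real bookkeeping is the exact cancellation $-\rho(u-v)+\rho(u-v)=0$ of the drag contributions between the two momentum equations (for (ii)) and the regrouping $\rho u\cdot(u-v)-\rho v\cdot(u-v) = \rho|u-v|^2$ between the two energy balances (for (iii)), both of which reflect the Newton's-third-law structure of the coupling.
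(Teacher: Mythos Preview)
Your proposal is correct and follows the standard energy-balance argument; the paper in fact omits the proof entirely, pointing to \cite[Lemma 2.1]{HKK14} (and \cite{Choi15,Choi16,Choi16_2}) for exactly the computation you outline.
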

For the sake of notational simplicity, we set a total energy $E$ and its dissipation rate $D$:
\[
E(t):= \intr \rho(x,t)|u(x,t)|^2\,dx + \frac12\intr |v(x,t)|^2\,dx
\]
and
\[
D(t):= \intr |\nabla v(x,t)|^2\,dx + \intr \rho(x,t)|(u-v)(x,t)|^2\,dx,
\]
respectively.

Finally, in the theorem below, we present the local-in-time existence and uniqueness of classical solutions to our main system \eqref{A-1}. 
\begin{theorem}\label{thm_local} Let $d \ge 3$ and $s \geq 2[d/2]+1$. Suppose that the initial data $(\rho_0, u_0, v_0)$ satisfy the assumptions (i) and (ii) in Theorem \ref{T2.1}. Then for any positive constants $\epsilon_0 < \delta_0$, there exists a positive constant $T_0$ depending only on $\epsilon_0$ and $\delta_0$ such that if 
\[
\|\rho_0\|_{H^s} + \|u_0\|_{H^{s+2}} + \|v_0\|_{H^{s+1}} + \|v_0\|_{L^1}<\epsilon_0,
\]
then the pressureless ENS system \eqref{A-1}-\eqref{A-1_ini} admits a unique solution 
\[
(\rho, u, v) \in \mc([0,T_0];H^s(\R^d)) \times \mathcal{C}([0,T_0];H^{s+2}(\R^d)) \times \mathcal{C}([0,T_0];H^{s+1}(\R^d))
\] 
satisfying $\rho(x,t) > 0$ for all $(x,t) \in \R^d \times [0,T_0]$ and
\[
\sup_{0 \leq t \leq T_0}\lt( \|\rho(\cdot,t)\|_{H^s} + \|u(\cdot,t)\|_{H^{s+2}} + \|v(\cdot,t)\|_{H^{s+1}} \rt) \leq \delta_0.
\]
\end{theorem}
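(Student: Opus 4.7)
The plan is a standard Picard iteration in the spaces dictated by the statement. Setting $(\rho^0,u^0,v^0)=(\rho_0,u_0,v_0)$ extended constantly in $t$, and given the $n$-th iterate, I define $(\rho^{n+1},u^{n+1},v^{n+1})$ as the solution of the three decoupled linear problems
\begin{align*}
&\pa_t \rho^{n+1} + \nabla\cdot(\rho^{n+1} u^n) = 0,\\
&\pa_t u^{n+1} + (u^n\cdot\nabla)u^{n+1} + u^{n+1} = v^n,\\
&\pa_t v^{n+1} + (v^n\cdot\nabla)v^{n+1} - \Delta v^{n+1} + \rho^n v^{n+1} + \nabla p^{n+1} = \rho^n u^n,\quad \nabla\cdot v^{n+1}=0,
\end{align*}
with data $(\rho_0,u_0,v_0)$. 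The second equation comes from expanding the momentum equation and dividing by $\rho^n$, which is legitimate since $\rho^n>0$ is preserved by the scheme (see below); the resulting Euler block is plain transport and does not need symmetrization. The first two are solved along the flow of $u^n$; the third is a linear parabolic Stokes problem, solved via the Leray projection and the heat semigroup.

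The heart of the argument is a uniform a priori bound in the target spaces. Let $\mathcal{N}^n(t):=\|\rho^n(t)\|_{H^s}^2+\|u^n(t)\|_{H^{s+2}}^2+\|v^n(t)\|_{H^{s+1}}^2$. I apply $\nabla^k$ to each equation for $0\le k\le s,\,s+2,\,s+1$ respectively, pair with the corresponding unknown, and exploit the Moser-type commutator estimates of Lemma~\ref{lem_moser}(i) together with the embedding $H^{[d/2]+1}\hookrightarrow L^\infty$ of Lemma~\ref{lem_moser}(ii). The damping in the $u$-equation produces a good contribution $\|u^{n+1}\|_{H^{s+2}}^2$, the Laplacian in the $v$-equation produces $\|\nabla v^{n+1}\|_{H^{s+1}}^2$, and the term $\rho^n|v^{n+1}|^2$ has a favorable sign because $\rho^n\ge 0$. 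After absorption, the estimate schematically reads
\[
\tfrac{d}{dt}\mathcal{N}^{n+1}(t) + \|\nabla v^{n+1}(t)\|_{H^{s+1}}^2 \le C\,P\bigl(\mathcal{N}^n(t)\bigr)\bigl(\mathcal{N}^{n+1}(t)+\mathcal{N}^n(t)\bigr)
\]
for some polynomial $P$. Parabolic smoothing provides $\|v^n\|_{L^2_t H^{s+2}}$ on short times, which I feed into a Duhamel estimate for the $u$-equation to close the $H^{s+2}$ bound despite the absence of smoothing in that equation. Starting the induction with $\mathcal{N}^n\le\delta_0^2$ on $[0,T_0]$, a Gronwall argument then shows that for $T_0=T_0(\epsilon_0,\delta_0)>0$ small enough one also has $\mathcal{N}^{n+1}\le\delta_0^2$ on $[0,T_0]$.

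Convergence follows from a Cauchy estimate on consecutive differences in the lower norm $L^2\times L^2\times H^1$, where the uniform high-regularity bounds above dominate all coefficients; shrinking $T_0$ once more yields a contraction. Interpolating between weak-$\ast$ convergence in the high norm and strong convergence in the low norm permits passage to the limit in the nonlinear system, and a Bona--Smith regularization upgrades weak to strong time-continuity at the top regularity. Positivity of $\rho^{n+1}$ persists at each step from the transport identity $\rho^{n+1}(X^n(x,t),t)=\rho_0(x)\exp\bigl(-\int_0^t (\nabla\cdot u^n)(X^n(x,\tau),\tau)\,d\tau\bigr)$, where $X^n$ is the flow of $u^n$ (well defined since $u^n\in H^{s+2}\hookrightarrow W^{1,\infty}$), and this property passes to the limit. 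Uniqueness is obtained by the same $L^2\times L^2\times H^1$ energy estimate on the difference of two solutions followed by Gronwall. The most delicate point throughout is the asymmetry of the regularities forced by the absence of pressure in the Euler block: $\rho$, $u$ and $v$ cannot be estimated in the same Sobolev space, and the scheme closes only because the parabolic smoothing in the $v$-equation supplies exactly the $L^2_t H^{s+2}$ control of $v$ that the source of the $u$-equation demands.
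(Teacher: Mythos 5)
The paper gives no proof of this theorem at all---it defers to \cite{CK16,HKK14}---and your linearized-iteration-plus-energy-estimates construction is precisely the standard argument carried out in those references, with the one genuinely delicate point (that $u$ can only be closed in $H^{s+2}$ because the parabolic smoothing of the $v$-equation supplies $v\in L^2_tH^{s+2}$ to feed the undamped transport equation for $u$) correctly identified; the same mechanism reappears in the paper's own a priori estimates, Lemmas \ref{L3.5} and \ref{L3.6}. The only bookkeeping slip is the contraction norm: the difference of the continuity equations contains $\nabla\cdot\bigl(\rho^n(u^n-u^{n-1})\bigr)$, so $\delta u$ must be measured in $H^1$ rather than $L^2$---a harmless adjustment given your uniform $H^{s+2}$ bounds.
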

The study of local-in-time existence theory for each equation in \eqref{A-1} is by now well-developed in the $H^s$ Sobolev space. Thus we skip the proof of the above theorem. We refer to \cite{CK16,HKK14} for the readers who are interested in it. 

%
%
%

\section{A priori estimate of the large-time behavior of solutions}\label{sec:ap}
\setcounter{equation}{0}
In this section, we present a priori estimate of the time behavior of solutions to the pressureless ENS system \eqref{A-1}. For this, we first investigate the large-time behavior of solutions to the heat equation.
\begin{lemma}\label{L3.1}
Let $V$ be a solution to the heat equation on $\R^d$:
\bq\label{heat_eq}
\pa_t V -\Delta V = 0 \quad \mbox{with} \quad V|_{t=0} = v_0.
\eq
Then, we have
\[
\|V(\cdot,t)\|_{L^2}^2 \le \frac{C(\|v_0\|_{L^2}^2 + \|v_0\|_{L^1}^2)}{(1+t)^{d/2}} \quad \forall \,t \ge0,
\]
where $C$ is independent of $v_0$ and $t$.
\end{lemma}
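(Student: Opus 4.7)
The plan is to work on the Fourier side, exploiting the explicit heat semigroup representation $\widehat{V}(\xi,t) = e^{-t|\xi|^2}\widehat{v_0}(\xi)$. By Plancherel's identity,
\[
\|V(\cdot,t)\|_{L^2}^2 = (2\pi)^{-d}\int_{\R^d} e^{-2t|\xi|^2}|\widehat{v_0}(\xi)|^2\,d\xi,
\]
so the task reduces to extracting the $(1+t)^{-d/2}$ rate from this Gaussian weight, taking advantage of the $L^1$ integrability of $v_0$ through the elementary bound $\|\widehat{v_0}\|_{L^\infty}\le \|v_0\|_{L^1}$.

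First I would split the frequency integral into the low-frequency region $\{|\xi|\le 1\}$ and the high-frequency region $\{|\xi|>1\}$. On the low-frequency piece, I use $|\widehat{v_0}(\xi)|^2\le \|v_0\|_{L^1}^2$ and control the Gaussian integral: passing to polar coordinates and using $\int_0^\infty r^{d-1}e^{-2tr^2}\,dr \lesssim t^{-d/2}$ for $t\ge 1$, together with the trivial bound $\int_{|\xi|\le 1} d\xi \lesssim 1$ for $t\le 1$, yields
\[
\int_{|\xi|\le 1}e^{-2t|\xi|^2}\,d\xi \lesssim \frac{1}{(1+t)^{d/2}}.
\]
On the high-frequency piece, I exploit $e^{-2t|\xi|^2}\le e^{-2t}$ valid for $|\xi|\ge 1$, so that
\[
\int_{|\xi|>1}e^{-2t|\xi|^2}|\widehat{v_0}(\xi)|^2\,d\xi \le e^{-2t}\|\widehat{v_0}\|_{L^2}^2 = e^{-2t}\|v_0\|_{L^2}^2,
\]
and since $e^{-2t}\lesssim (1+t)^{-d/2}$ uniformly in $t\ge 0$, this contribution is acceptable.

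Combining the two estimates gives the claimed bound with a constant depending only on $d$. I don't anticipate a real obstacle here; the only point requiring slight care is the unified $(1+t)^{-d/2}$ form of the low-frequency estimate across small and large $t$, which is handled by treating the regimes $t\le 1$ and $t\ge 1$ separately and then absorbing both into $(1+t)^{-d/2}$.
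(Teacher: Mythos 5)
Your proof is correct, but it takes a genuinely different route from the paper's. The paper follows Schonbek's Fourier splitting method: it differentiates $\|\widehat V\|_{L^2}^2$ in time, splits the dissipation integral $\int |\xi|^2|\widehat V|^2\,d\xi$ at the \emph{time-dependent} radius ${\bf r}(t)=(d/(2(1+t)))^{1/2}$, bounds the low-frequency remainder by $\|v_0\|_{L^1}^2(1+t)^{-d/2-1}$ using $|\widehat V(\xi,t)|\le\|v_0\|_{L^1}e^{-|\xi|^2t}$, and arrives at the differential inequality $\frac{d}{dt}\bigl((1+t)^d\|\widehat V\|_{L^2}^2\bigr)\le C\|v_0\|_{L^1}^2(1+t)^{d/2-1}$, which it then integrates. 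You instead exploit the explicit Gaussian multiplier $e^{-2t|\xi|^2}$ and split at the \emph{fixed} radius $|\xi|=1$, estimating the low frequencies by $\|v_0\|_{L^1}^2\int_{|\xi|\le1}e^{-2t|\xi|^2}\,d\xi\lesssim\|v_0\|_{L^1}^2(1+t)^{-d/2}$ and the high frequencies by $e^{-2t}\|v_0\|_{L^2}^2\lesssim(1+t)^{-d/2}\|v_0\|_{L^2}^2$; both steps are sound, including the case split $t\le1$ versus $t\ge1$ for the unified $(1+t)^{-d/2}$ form. Your argument is shorter and more elementary for the heat equation itself. What the paper's choice buys is that the same Fourier-splitting differential-inequality scheme (time-dependent cutoff $g(t)$, Gr\"onwall-type lemma) is precisely the machinery redeployed in Proposition \ref{P3.1} for the coupled system via Duhamel's formula, where no explicit solution formula is available; your direct computation would not transfer to that setting.
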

\begin{proof} By taking the Fourier transform of \eqref{heat_eq}, we find
\[
\pa_t \widehat V(\xi,t) + |\xi|^2 \widehat V(\xi,t) = 0, 
\]
and subsequently solving the above differential equation implies
\[
\widehat V(\xi,t) = \widehat V_0(\xi) e^{-|\xi|^2 t}.
\]
We next estimate $L^2$-norm of $\widehat V$. Based on the proof in \cite{Sch85}, we set
\[
{\bf r}(t) :=\lt(\frac{d}{2(1+t)}\rt)^{1/2}. 
\]
Then we obtain
\begin{align}\label{est_v2}
\begin{aligned}
\frac12\frac{d}{dt}\|\widehat V\|_{L^2}^2 &= -\intr |\xi|^2 |\widehat V|^2\,d\xi\\
&= - \int_{\{\xi\in\R^d:|\xi|\ge {\bf r}(t)\}} |\xi|^2 |\widehat V|^2\,d\xi -  \int_{\{\xi\in\R^d:|\xi|\le {\bf r}(t)\}} |\xi|^2 |\widehat V|^2\,d\xi\\
&\le -\frac{d}{2(1+t)}\int_{\{\xi\in\R^d:|\xi|\ge {\bf r}(t)\}} |\widehat V|^2\,d\xi -  \int_{\{\xi\in\R^d:|\xi|\le {\bf r}(t)\}} |\xi|^2 |\widehat V|^2\,d\xi\\
&= -\frac{d}{2(1+t)}\|\widehat V\|_{L^2}^2 +   \int_{\{\xi\in\R^d:|\xi|\le {\bf r}(t)\}}\lt(\frac{d}{2(1+t)} -|\xi|^2\rt) |\widehat V|^2\,d\xi.
\end{aligned}
\end{align}
Since 
\[
|\widehat V(\xi,t)| = |\widehat V_0(\xi)|e^{-|\xi|^2 t} \le \|v_0\|_{L^1}e^{-|\xi|^2 t},
\] 
this yields
\[\begin{aligned}
\int_{\{\xi\in\R^d:|\xi|\le {\bf r}(t)\}}\lt(\frac{d}{2(1+t)} -|\xi|^2\rt) |\widehat V|^2\,d\xi
& \leq \frac{d}{2(1+t)}\int_{\{\xi\in\R^d:|\xi|\le {\bf r}(t)\}} |\widehat V|^2\,d\xi \cr
&\le \frac{C\|v_0\|_{L^1}^2}{2(1+t)}\int_0^{{\bf r}(t)} \tau^{d-1} e^{-2\tau^2 t}\, d\tau \cr
&\le \frac{C\|v_0\|_{L^1}^2}{2(1+t)} ({\bf r}(t))^d \cr
&\le \frac{C\|v_0\|_{L^1}^2}{(1+t)^{d/2+1}},
\end{aligned}\]
where $C$ is independent of $t$ and $V$. Combining this with \eqref{est_v2} gives
\[
\frac{d}{dt}\lt( (1+t)^d \|\widehat V(\cdot,t)\|_{L^2}^2\rt) \le C\|v_0\|_{L^1}^2 (1+t)^{d/2-1}.
\]
We then integrate it over the time interval $[0,t]$ to have 
\[
\|\widehat V(\cdot,t)\|_{L^2}^2 \le \frac{\|\widehat V_0\|_{L^2}^2}{(1+t)^d} + C\|v_0\|_{L^1}^2\frac{(1+t)^{d/2}-1}{(1+t)^d} \leq  \frac{C\lt(\|\widehat V_0\|_{L^2}^2 + \|v_0\|_{L^1}^2\rt)}{(1+t)^{d/2}}.
\]
Finally, we use Plancherel's Theorem to conclude the desired result.
\end{proof}

Next, we study a priori estimate  for the large-time behavior of solutions to the system \eqref{A-1}.

\begin{proposition}\label{P3.1}
For $T>0$ and $d\ge 3$, let $(\rho,u,v)$ be a classical solution to the pressureless ENS system \eqref{A-1} on the time interval $[0,T]$ satisfying $\|\rho\|_{L^\infty(\R^d \times (0,T))}<\infty$. Then, there exists a constant $C>0$ independent of $T$ such that for every $\alpha \in (0,d/2)$,
\bq\label{large_time}
E(t)(1+t)^\alpha + \int_0^t (1+\tau)^\alpha D(\tau)\,d\tau \le C(E(0)+\|v_0\|_{L^1}^2) \quad \forall \, t \in [0,T].
\eq
\end{proposition}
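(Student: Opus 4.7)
The dissipation identity from Lemma~\ref{lem_energy}(iii) yields $\tfrac{d}{dt}\bar{E}(t) + D(t) = 0$, where $\bar E(t) := \tfrac12\intr\rho|u|^2\,dx+\tfrac12\intr|v|^2\,dx$, and since $\bar E(t) \le E(t) \le 2\bar E(t)$ it suffices to establish the weighted bound for $\bar E$. Multiplying this identity by $(1+t)^\alpha$ and integrating in time I would obtain
\[
(1+t)^\alpha \bar E(t) + \int_0^t (1+\tau)^\alpha D(\tau)\,d\tau = \bar E(0) + \alpha \int_0^t (1+\tau)^{\alpha-1} \bar E(\tau)\,d\tau,
\]
so the core task reduces to bounding the last integral by $C(\bar E(0)+\|v_0\|_{L^1}^2)$ plus a term that can be absorbed into $\int_0^t(1+\tau)^\alpha D(\tau)\,d\tau$.

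Since $\intr \rho|u|^2\,dx \le 2\intr\rho|u-v|^2\,dx + 2\|\rho\|_{L^\infty(\R^d\times(0,T))}\|v\|_{L^2}^2 \le 2D(\tau)+2\|\rho\|_{L^\infty}\|v(\tau)\|_{L^2}^2$, the problem further reduces to estimating $\int_0^t (1+\tau)^{\alpha-1}\|v(\tau)\|_{L^2}^2\,d\tau$. For this I would apply the Schonbek-type Fourier splitting used in Lemma~\ref{L3.1}: fix a large constant $\beta>0$, set $r(\tau):=(\beta/(1+\tau))^{1/2}$, and split
\[
\|v(\tau)\|_{L^2}^2 \le \int_{|\xi|\ge r(\tau)}|\hat v|^2\,d\xi + \int_{|\xi|< r(\tau)}|\hat v|^2\,d\xi \le \frac{1+\tau}{\beta}\,D(\tau) + \int_{|\xi|< r(\tau)}|\hat v|^2\,d\xi.
\]
Integrating the high-frequency contribution against $(1+\tau)^{\alpha-1}$ produces a multiple of $\beta^{-1}\int_0^t(1+\tau)^\alpha D(\tau)\,d\tau$, which for $\beta$ sufficiently large is absorbed into the corresponding term on the left of the weighted identity.

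The low-frequency piece is handled by applying the Leray projection $\mathbb{P}$ to the Navier--Stokes equation in \eqref{A-1} (killing $\nabla p$) and expressing $v$ via Duhamel against the heat semigroup, in the spirit of Lemma~\ref{L3.1}:
\[
\hat v(\xi,\tau) = e^{-|\xi|^2 \tau}\hat v_0(\xi) + \int_0^\tau e^{-|\xi|^2(\tau-s)}\bigl(\widehat{\mathbb{P}[\rho(u-v)]}(\xi,s) - i\,\xi\cdot\widehat{\mathbb{P}[v\otimes v]}(\xi,s)\bigr)\,ds.
\]
Combining $\|\hat g\|_{L^\infty_\xi}\le\|g\|_{L^1}$ with (a) $|\hat v_0(\xi)|\le\|v_0\|_{L^1}$, (b) the Cauchy--Schwarz bound $\|\rho(u-v)\|_{L^1}\le\|\rho_0\|_{L^1}^{1/2}\,D(s)^{1/2}$ that uses the mass conservation of Lemma~\ref{lem_energy}(i), and (c) $|\xi|\,\|v\otimes v\|_{L^1}\le r(\tau)\|v(s)\|_{L^2}^2$ valid whenever $|\xi|<r(\tau)$, then multiplying the square by the volume factor $|B_{r(\tau)}|\sim (1+\tau)^{-d/2}$, yields a schematic bound
\[
\int_{|\xi|<r(\tau)}|\hat v|^2\,d\xi \lesssim (1+\tau)^{-d/2}\biggl(\|v_0\|_{L^1}^2 + \Bigl(\int_0^\tau D(s)^{1/2}\,ds\Bigr)^2 + r(\tau)^2\Bigl(\int_0^\tau\|v(s)\|_{L^2}^2\,ds\Bigr)^2\biggr).
\]

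The main obstacle is that the two inner time integrals are not a priori finite, so the argument must be closed by a bootstrap. Starting from the crude bound $\bar E(\tau)\le \bar E(0)$ one first gets a small decay rate for $\|v(\tau)\|_{L^2}^2$; feeding this rate back into the low-frequency estimate and into the weighted identity improves it, and after finitely many iterations the decay exponent approaches any value strictly below $d/2$, the ceiling being dictated by the heat-kernel volume factor $r(\tau)^d$ and by the borderline failure of $\int_0^\tau(1+s)^{-d/2}\,ds$ to remain bounded. This is precisely what forces the strict inequality $\alpha<d/2$. Choosing $\beta$ large, inserting the refined decay of $\|v\|_{L^2}^2$ into the weighted identity, and invoking Grönwall's inequality then closes the loop and delivers \eqref{large_time}.
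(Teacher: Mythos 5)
Your proposal is correct and follows essentially the same route as the paper: the Schonbek-type Fourier splitting of the dissipation with a time-dependent cutoff, the Duhamel representation of $v$ against the heat semigroup with the pressure removed via incompressibility, the bound $\|\rho(u-v)\|_{L^1}\lesssim D^{1/2}$ from mass conservation, and an iterative bootstrap pushing the decay exponent up to any $\alpha<d/2$. The only cosmetic differences are that the paper writes the polynomial weight as $\exp(\int_0^t \tilde{g}^2(\tau)\,d\tau)$ for a cutoff $g^2(t)\sim \alpha/(1+t)$ and invokes a Gr\"onwall-type lemma from \cite{Hpre} rather than absorbing terms directly.
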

\begin{proof}
Since the proof is almost the same as \cite[Theorem 2.1]{Hpre}, we only provide the sketch of proof. We notice from Lemma \ref{lem_energy} that
\bq\label{P3-1.1}
\frac{d}{dt}E(t) + D(t) = 0.
\eq
We then estimate the lower bound of the dissipation term $D(t)$. Introducing a continuous cutoff $g(t)$, which is bounded uniformly in $t$ and will be specified later, we estimate 
$$\begin{aligned}
\intr |\nabla v|^2\,dx &= \intr |\xi|^2 |\widehat v|^2\,d\xi \cr
&\ge \int_{\{\xi\in\R^d:|\xi| \ge g(t)\}}   |\xi|^2 |\widehat v|^2\,d\xi \cr
&\ge g^2(t)\intr |v|^2\,dx -g^2(t) \int_{\{\xi\in\R^d:|\xi|\le g(t)\}} |\widehat v|^2\,d\xi,
\end{aligned}$$
where we used Plancherel's Theorem.
Moreover, we obtain
\[
\intr \rho|u-v|^2\,dx \ge \frac12 \intr \rho|u|^2\,dx - \|\rho\|_{L^\infty(\R^d \times (0,T))}\|v\|_{L^2}^2.
\]
We next choose a constant $C_0>0$ such that
\[
\frac{\|\rho\|_{L^\infty}}{1+C_0} \leq \frac12  \quad \mbox{and} \quad \frac{\|g\|_{L^\infty}^2}{1+C_0} \le \frac12.
\]
This gives
\[
D(t) \ge \frac12 D(t)  + \tilde{g}(t)^2E(t) - \frac{g^2(t)}{2} \int_{\{\xi\in\R^d:|\xi|\le g(t)\}}|\widehat v|^2\,d\xi,
\]
where $\tilde{g}(t)^2 :=  g^2(t)/(4(1+C_0))$. Combining this with \eqref{P3-1.1}, we obtain
\bq\label{P3-1.2}
E(t) + \int_s^t \tilde{g}^2(\tau)E(\tau)\,d\tau +\frac12\lt(\int_s^t D(\tau)\,d\tau\rt) \le E(s) + \frac12\int_s^t g^2(\tau)\int_{\{\xi\in\R^d:|\xi|\le g(\tau)\}}|\widehat v(\xi,\tau)|^2\,d\xi d\tau
\eq
for $0\le s \le t \le T$. Now, we set $F := \rho(u-v)$, and let $V$ be the solution of the heat equation \eqref{heat_eq} corresponding to the initial data $V(x,0) = v_0(x)$. Then, applying Duhamel's formula to the incompressible Navier--Stokes equations in \eqref{A-1} yields
\[
\widehat v(\xi,\tau) = \widehat V(\xi,\tau) + \int_0^\tau (-\widehat{v \cdot \nabla v} - \widehat{\nabla p} + \widehat{F})(\xi,r)e^{|\xi|^2(r-\tau)}\,dr.
\]
On the other hand, the incompressibility condition implies
\[
|\widehat{\nabla p}| = \lt| \frac{\xi \cdot (-\widehat{v \cdot \nabla v} + \widehat{F})}{|\xi|^2} \xi \rt| \le \lt|-\widehat{v \cdot \nabla v} + \widehat{F}\rt|,
\]
and thus we get
$$\begin{aligned}
&\int_{\lt\{\xi\in\R^d:|\xi|\le g(\tau)\rt\}} |\widehat{v}(\xi,\tau)|^2\,d\xi \cr
&\quad \le C\lt(\|V(\cdot,\tau)\|_{L^2}^2 + g(\tau)^{d+2} \lt(\int_0^\tau \|v(\cdot,r)\|_{L^2}^2\,dr\rt)^2 + g(\tau)^d \lt( \int_0^\tau \|\rho(u-v)(\cdot,r)\|_{L^1}\,dr\rt)^2 \rt),
\end{aligned}$$
where $C>0$ is independent of $T$. This together with \eqref{P3-1.2} gives
$$
\begin{aligned}
&E(t) + \int_s^t \tilde{g}^2(\tau)E(\tau)\,d\tau +\frac12\lt(\int_s^t D(\tau)\,d\tau\rt)\\
&\quad \le E(s) + C\int_s^t g^2(\tau)\|V(\cdot,\tau)\|_{L^2}^2\,d\tau + C\int_s^t g(\tau)^{d+4} \lt(\int_0^\tau \|v(\cdot,r)\|_{L^2}^2\,dr\rt)^2\,d\tau\\
&\qquad + C\int_s^t g^{d+2}(\tau)\lt(\int_0^\tau \|\rho(u-v)(\cdot,r)\|_{L^1}\,dr\rt)^2\,d\tau.
\end{aligned}
$$
Since
\[
\|\rho(u-v)\|_{L^1} \le \|\rho\|_{L^1}^{1/2} \lt(\intr \rho |u-v|^2\,dx\rt)^{1/2} = \lt(\intr \rho |u-v|^2\,dx\rt)^{1/2},
\]
we use Gr\"onwall-type lemma in \cite[Lemma 2.2]{Hpre} to deduce
\begin{align}\label{est_decay}
\begin{aligned}
E(t)& \exp\lt(\int_0^t \tilde{g}^2(\tau)\,d\tau\rt) +\frac12\int_0^t D(\tau) \exp\lt(\int_0^\tau \tilde{g}^2(r)\,dr\rt)\,d\tau\\
&\le E(0) + C\int_0^t g^2(\tau)\|V(\cdot,\tau)\|_{L^2}^2  \exp\lt(\int_0^\tau \tilde{g}^2(r)\,dr\rt)\,d\tau\\
&\quad + C\int_0^t g^{d+4}(\tau) \lt(\int_0^\tau \|v(\cdot,r)\|_{L^2}^2\,dr\rt)^2  \exp\lt(\int_0^\tau \tilde{g}^2(r)\,dr\rt)\,d\tau\\
&\quad + C\int_0^t g^{d+2}(\tau)\lt( \int_0^\tau D^{1/2}(r)\,dr\rt)^2  \exp\lt(\int_0^\tau \tilde{g}^2(r)\,dr\rt)\,d\tau.
\end{aligned}
\end{align}
Now, we further choose $g$ satisfying
\[
g^2(t) = \frac{4\alpha (1+C_0)}{1+t}, \quad \mbox{i.e.,} \quad \tilde{g}^2(t) = \frac{\alpha}{10+t},
\]
where $\alpha \in [1, d/2)$ to be determined later. This gives
\[
\exp\lt(\int_0^\tau \tilde{g}^2(r)\,dr\rt) = (10+\tau)^\alpha,
\]
and we use Lemma \ref{L3.1} to get
$$
\begin{aligned}
E(0) &+ C\int_0^t g^2(\tau) \|V(\cdot,\tau)\|_{L^2}^2 \exp\lt(\int_0^\tau \tilde{g}^2(r)\,dr\rt)\,d\tau\\
&\le C\lt( E(0) + (\|v_0\|_{L^2} + \|v_0\|_{L^1}^2)\int_0^t (1+\tau)^{-(1+d/2)+\alpha}\,d\tau\rt) \\
&\le C\lt(E(0) + \|v_0\|_{L^1}^2\rt).
\end{aligned}
$$
Here $C > 0$ is independent of $T$. To get the desired result, we prove some a priori estimates. First, assume that for some $\beta\in[0,d/2)$,
\[
E(t) \le \frac{C(E(0)+\|v_0\|_{L^1}^2)}{(1+t)^\beta} \quad \forall\,t \in [0,T]. 
\]
Then, under this a priori assumption, we estimate the third term on the right hand side of the inequality \eqref{est_decay} as
\bq\label{P3-1.5}
\begin{aligned}
\int_0^t& g^{d+4}(\tau) \lt(\int_0^\tau \|v(\cdot,r)\|_{L^2}^2\,dr\rt)^2  \exp\lt(\int_0^\tau \tilde{g}^2(r)\,dr\rt)\,d\tau\\
&\le C(E(0)+\|v_0\|_{L^1}^2) \int_0^t (1+\tau)^{\alpha-2\beta -d/2}\,d\tau\\
&\le C(E(0)+\|v_0\|_{L^1}^2)\times \left\{\begin{array}{lc} (1+t)^{\alpha-2\beta-d/2+1} & \mbox{if} \quad\alpha-2\beta -d/2 >-1, \\
1 & \mbox{if} \quad \alpha-2\beta -d/2 <-1.\end{array}\right.
\end{aligned}
\eq
Moreover, if we assume that 
\[
\int_0^\tau D(\tau) (10+\tau)^\alpha \le C(E(0)+\|v_0\|_{L^1}^2)\frac{(10+t)^\alpha}{(1+t)^\beta} \quad \forall\,t \in [0,T],
\]
then we have
\bq\label{P3-1.6}
\begin{aligned}
\int_0^t &g^{d+2}(\tau)\lt( \int_0^\tau D^{1/2}(r)\,dr\rt)^2  \exp\lt(\int_0^\tau \tilde{g}^2(r)\,dr\rt)\,d\tau\\
&\le C\int_0^t (1+\tau)^{\alpha-d/2-1} \lt[\int_0^\tau D(r)(10+r)^\alpha\,dr\rt]\lt[\int_0^\tau (1+r)^{-\alpha}\,dr\rt]  \,d\tau\\
&\le C(E(0)+\|v_0\|_{L^1}^2) \int_0^t (1+\tau)^{2\alpha-\beta -d/2-1}\,d\tau\\
&\le C(E(0)+\|v_0\|_{L^1}^2)\times \left\{\begin{array}{lc} (1+t)^{2\alpha-\beta-d/2} & \mbox{if} \quad 2\alpha-\beta -d/2 > 0, \\
1 & \mbox{if} \quad 2\alpha-\beta -d/2 <0.\end{array}\right.
\end{aligned}
\eq
Note that \eqref{P3-1.1} implies that \eqref{P3-1.5} and \eqref{P3-1.6} actually hold for $\alpha=1$ and $\beta=0$. From now on, we first let $\alpha=1$ and $\beta=0$ in \eqref{P3-1.5}, \eqref{P3-1.6} and follow the procedure based on the inductive argument in \cite[Theorem 2.1]{Hpre}. We can construct sequences $\alpha_n$ and $\beta_n$ such that \eqref{P3-1.5} and \eqref{P3-1.6} hold with $\alpha=\alpha_n$ and $\beta=\beta_n$ for each $n \in \N$, and $\alpha_n \to d/2$ and $\beta_n \to (d/2)(1-\e)/(1+\e)$ for any $\e>0$ as $n \to \infty$. This completes the proof.
\end{proof}

%
%
%

\section{Proof of Theorem \ref{T2.1}}\label{sec:main}

\subsection{A priori estimates}
In this part, we provide the a priori estimates for the global-in-time existence of classical solutions. Let $T>0$, $d \ge 3$, and $s \geq 2[d/2]+1$. Throughout this subsection, we assume that for a sufficiently small $\e_1>0$,
\[
\mathfrak{X}(s;T) := \sup_{0 \le t \le T}\lt( \|\rho(\cdot,t)\|_{H^s}^2 + \|u(\cdot,t)\|_{H^{s+2}}^2 + \|v(\cdot,t)\|_{H^{s+1}}^2\rt) \le \e_1^2 \ll 1.
\]
We denote by
\[
\mathfrak{X}_0(s) := \|\rho_0\|_{H^s}^2 + \|u_0\|_{H^{s+2}}^2 + \|v_0\|_{H^{s+1}}^2.
\]
Our main goal of this subsection is to prove the following uniform-in-time estimate.
\begin{proposition}\label{P3.2}
For $T>0$, suppose that $\e_1>0$ is sufficiently small satisfying
\[
\mathfrak{X}(s;T) + \|v_0\|_{L^1}^2 \le \e_1^2.
\] 
Then, there exists a constant $C^*>0$ independent of $T$ such that
\[
\mathfrak{X}(s;T) \le C^*\lt(\mathfrak{X}_0(s) + \|v_0\|_{L^1}^2\rt).
\]
\end{proposition}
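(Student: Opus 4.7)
My plan is to perform $H^s$-type energy estimates for each of the three equations in \eqref{A-1}, using the drag force and the viscous smoothing as dissipative structures, and to exploit the polynomial $L^2$-decay of Proposition \ref{P3.1} together with the smallness hypothesis $\mathfrak{X}(s;T)\le\e_1^2$ in order to close the a priori bound.

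The first step is to establish three basic differential inequalities. For the continuity equation, applying $\nabla^k$ for $0\le k\le s$, pairing with $\nabla^k\rho$ in $L^2$, integrating by parts, and invoking Lemma \ref{lem_moser}(i) yields
\[
\frac{d}{dt}\|\rho\|_{H^s}^2 \lesssim \|\nabla u\|_{L^\infty}\|\rho\|_{H^s}^2 + \|\rho\|_{L^\infty}\|\rho\|_{H^s}\|u\|_{H^{s+1}}.
\]
Because $\rho>0$, dividing the momentum equation by $\rho$ and using the continuity equation produces the forced transport equation $\partial_t u + u\cdot\nabla u + u = v$, whose $H^{s+2}$ energy estimate---with the drag-induced coercivity---reads
\[
\frac{d}{dt}\|u\|_{H^{s+2}}^2 + 2\|u\|_{H^{s+2}}^2 \lesssim \|\nabla u\|_{L^\infty}\|u\|_{H^{s+2}}^2 + \|v\|_{H^{s+2}}\|u\|_{H^{s+2}}.
\]
For the Navier--Stokes equation, the divergence-free condition annihilates the pressure term in the $L^2$-type estimate and the viscosity supplies the dissipation $\|\nabla v\|_{H^{s+1}}^2$, leaving
\[
\frac{d}{dt}\|v\|_{H^{s+1}}^2 + 2\|\nabla v\|_{H^{s+1}}^2 \lesssim \|\nabla v\|_{L^\infty}\|v\|_{H^{s+1}}^2 + \|\rho(u-v)\|_{H^{s+1}}\|v\|_{H^{s+1}}.
\]

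Next, I would sum these with suitable multipliers, use Lemma \ref{lem_moser}(ii) to upgrade $L^\infty$ norms into $H^{[d/2]+1}$ norms (which are in turn controlled by $H^s$ norms since $s\ge 2[d/2]+1$), and invoke the smallness hypothesis to absorb every nonlinear contribution involving the highest derivatives into the drag and viscous dissipation on the left-hand side. The remaining right-hand side is dominated by $\|v\|_{L^2}^2$, by the total energy $E(t)$, and by the dissipation rate $D(t)$ of Proposition \ref{P3.1}. Integrating in time (weighting by $(1+t)^r$ for a suitable $r>0$ when useful) and invoking \eqref{large_time} produces the target uniform bound
\[
\mathfrak{X}(s;T) \le C^*\bigl(\mathfrak{X}_0(s)+\|v_0\|_{L^1}^2\bigr).
\]

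The main obstacle is the density estimate, because the continuity equation carries no intrinsic dissipation: uniform control of $\|\rho\|_{H^s}$ requires $\exp\bigl(C\int_0^T\|\nabla u\|_{L^\infty}\,d\tau\bigr)$ to be uniformly bounded in $T$. This is precisely where the polynomial decay $\|u\|_{H^{s+1}}^2\lesssim(1+\tau)^{-\alpha}$ from Proposition \ref{P3.1} must be combined with the uniform bound $\int_0^T\|u\|_{H^{s+2}}^2\,d\tau<\infty$ coming from the drag dissipation; a Cauchy--Schwarz-type argument, together with Sobolev interpolation of $\|\nabla u\|_{L^\infty}$ between $\|u\|_{H^{s+1}}$ and $\|u\|_{H^{s+2}}$, should give the desired uniform time integrability, provided $d\ge 3$ so that $\alpha\in(0,d/2)$ can be chosen large enough. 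The smallness of $\e_1$ then closes the bootstrap.
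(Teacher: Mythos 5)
Your outline assembles the right ingredients (transport estimate for $\rho$, drag coercivity for $u$, viscous dissipation for $v$, time weights, and a Cauchy--Schwarz argument for the density), and you correctly identify the density bound as the crux. But the plan has a genuine gap at its center: you invoke ``the polynomial decay $\|u\|_{H^{s+1}}^2\lesssim(1+\tau)^{-\alpha}$ from Proposition \ref{P3.1},'' whereas that proposition only controls the zeroth-order energy $E(t)=\int\rho|u|^2+\tfrac12\int|v|^2$ and the dissipation $D$. (Note also that $\int\rho|u|^2$ does not even control $\|u\|_{L^2}^2$, since $\rho$ has no uniform positive lower bound on $\R^d$; the paper must treat $\|u\|_{L^2}$ separately via the transport form of the $u$-equation and $\int_0^t\|v\|_{L^2}^2\,d\tau<\infty$.) The higher-order weighted bounds are precisely what must be \emph{proved}, and the paper does this by an inductive derivative ladder: the drag term turns a bound on $\int(1+\tau)^{2r}\|\nabla^k v\|_{L^2}^2\,d\tau$ into one on $(1+t)^{2r}\|\nabla^k u\|_{L^2}^2$ and its time integral (Lemma \ref{L3.3}), while the viscosity turns bounds on $\nabla^{\le k}u,\nabla^{\le k}v$ into a bound on $\int(1+\tau)^{2r}\|\nabla^{k+1}v\|_{L^2}^2\,d\tau$ (Lemma \ref{L3.4}); the induction is seeded by Proposition \ref{P3.1}'s control of $\int(1+\tau)^{\alpha}\|\nabla v\|_{L^2}^2\,d\tau$ and climbs one derivative at a time up to $\nabla^{s+2}u$ and $\nabla^{s+1}v$ (Corollary \ref{C3.1}, Lemmas \ref{L3.5}--\ref{L3.6}). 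Your proposal to ``sum the three inequalities with multipliers and absorb'' skips this alternating structure, without which the right-hand sides cannot be closed: an unweighted Gr\"onwall for $\rho$ gives only a bound growing like $\sqrt{T}$.

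Two further points would need repair. First, the weight exponent is constrained on both sides: one needs $2r>1$ so that $\int_0^\infty(1+\tau)^{-2r}\,d\tau<\infty$ in the Cauchy--Schwarz step $\int_0^t\|\nabla u\|_{H^s}\,d\tau\le(\int(1+\tau)^{2r}\|\nabla u\|_{H^s}^2\,d\tau)^{1/2}(\int(1+\tau)^{-2r}\,d\tau)^{1/2}$, and $r<3/4$ so that the error term $r(1+t)^{2r-1}\|\nabla^k u\|_{L^2}^2$ produced by differentiating the weight can be absorbed by the drag dissipation $-(1+t)^{2r}\|\nabla^k u\|_{L^2}^2$; your ``suitable $r>0$'' must land in $(1/2,3/4)$, and your alternative route via interpolation of $\|\nabla u\|_{L^\infty}$ against $\|u\|_{H^{s+1}}$ fails because no decay of $\|u\|_{H^{s+1}}$ is yet available. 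Second, your Navier--Stokes inequality carries the term $\|\rho(u-v)\|_{H^{s+1}}\|v\|_{H^{s+1}}$, which by the Moser inequality requires $s+1$ derivatives of $\rho$ --- one more than is controlled. The paper avoids this by integrating by parts so that the top-order term reads $\int\nabla^{k-1}(\rho(u-v))\,\nabla^{k+1}v\,dx$, placing at most $s$ derivatives on $\rho$ and absorbing $\nabla^{k+1}v$ into the viscous dissipation; this is also the mechanism behind the regularity gap $\rho\in H^s$, $v\in H^{s+1}$, $u\in H^{s+2}$.
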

We provide uniform-in-time estimates in the following order:
\begin{enumerate}
\item[(i)] $L^\infty(0,T;L^2(\R^d))$ estimate of $u$ (Lemma \ref{L3.2}).
\vspace{0.2cm}

\item[(ii)] $L^2(\R^d \times (0,T))$ estimate of $(1+t)^r\nabla^k u$ for $1 \le k\le s+1$ and  $r \in (0,3/4)$ (Lemma \ref{L3.3}).
 \vspace{0.2cm}
 
\item[(iii)] $L^\infty(0,T;L^2(\R^d))$ estimate of $(1+t)^r\nabla^k v$ and $L^2(\R^d \times (0,T))$ estimate of $(1+t)^r \nabla^{k+1} v$ for $1 \le k\le s$ and $r \in (0,3/4)$ (Lemma \ref{L3.4}).
\vspace{0.2cm}

\item[(iv)] $L^\infty(0,T;L^2(\R^d))$ estimates of $\nabla^{s+1} v$ and $\nabla^{s+2} u$ (Lemmas \ref{L3.5} and \ref{L3.6}).
\vspace{0.2cm}

\item[(v)] $L^\infty(0,T;H^s(\R^d))$ estimates of $\rho$ (Lemma \ref{C3.3}).
\end{enumerate}

We proceed to the first step.
\begin{lemma}\label{L3.2}
There exists a constant $C>0$ independent of $T$ such that
\[
\|u(\cdot,t)\|_{L^2}^2 + \int_0^t \|u(\cdot,\tau)\|_{L^2}^2\,d\tau \le C\lt(\mathfrak{X}_0(s) + \|v_0\|_{L^1}^2\rt).
\]
\end{lemma}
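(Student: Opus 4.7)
The starting point is to convert the conservative Euler system into a transport equation for $u$ itself. Combining the continuity equation and the momentum equation in \eqref{A-1}, and using the positivity of $\rho(\cdot,t)$ (guaranteed by the bootstrap and Theorem \ref{thm_local}), one gets
\[
\partial_t u + (u\cdot\nabla)u = v - u.
\]
This form isolates a linear damping $-u$ and a lower-order forcing $v$, both ideally suited to an $L^2$ estimate.

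Next I would take the $L^2$ inner product with $u$ and integrate by parts in the convection term. Because $u$ is \emph{not} divergence-free, one picks up a cubic remainder:
\[
\tfrac{1}{2}\tfrac{d}{dt}\|u\|_{L^2}^2 + \|u\|_{L^2}^2 = \tfrac{1}{2}\int_{\R^d} |u|^2(\nabla\cdot u)\,dx + \int_{\R^d} u\cdot v\,dx.
\]
The first right-hand term is controlled by $\tfrac{1}{2}\|\nabla u\|_{L^\infty}\|u\|_{L^2}^2$; Lemma \ref{lem_moser}(ii) together with $s+2 \ge [d/2]+2$ and the bootstrap assumption gives $\|\nabla u\|_{L^\infty} \le C\|u\|_{H^{s+2}} \le C\e_1$, so for $\e_1$ small this contribution is absorbed into the dissipative $\|u\|_{L^2}^2$ on the left. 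The cross term is split by Young's inequality into $\tfrac{1}{2}\|u\|_{L^2}^2+\tfrac{1}{2}\|v\|_{L^2}^2$, leaving a differential inequality of the form
\[
\frac{d}{dt}\|u\|_{L^2}^2 + \tfrac{1}{2}\|u\|_{L^2}^2 \le \|v\|_{L^2}^2.
\]

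To close the argument I would invoke Proposition \ref{P3.1} with some $\alpha \in (1,d/2)$; this interval is nonempty precisely because $d\ge 3$. Then $\|v(\cdot,\tau)\|_{L^2}^2 \le 2E(\tau) \le C(E(0)+\|v_0\|_{L^1}^2)(1+\tau)^{-\alpha}$ is integrable over $[0,\infty)$, so integrating the differential inequality on $[0,t]$ yields the pointwise and time-integrated bounds simultaneously, with $E(0)\le \|\rho_0\|_{L^\infty}\|u_0\|_{L^2}^2+\tfrac{1}{2}\|v_0\|_{L^2}^2 \lesssim \mathfrak X_0(s)$ (again via Lemma \ref{lem_moser}(ii) applied to $\rho_0$).

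The main obstacle is the cubic correction $\tfrac{1}{2}\int|u|^2(\nabla\cdot u)\,dx$, which is absent in the incompressible case and reflects precisely the lack of pressure in the Euler block. Its absorption is the only step of the proof that genuinely uses the smallness $\e_1\ll 1$; every other piece reduces to Young's inequality and the polynomial time decay furnished by Proposition \ref{P3.1}.
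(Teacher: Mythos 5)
Your proof is correct and follows essentially the same route as the paper: the same $L^2$ energy identity with the cubic term $\tfrac12\int |u|^2(\nabla\cdot u)\,dx$ absorbed via the smallness of $\|\nabla u\|_{L^\infty}$, Young's inequality on the cross term, and Proposition \ref{P3.1} with $\alpha>1$ to integrate $\|v\|_{L^2}^2$ in time. The only differences are cosmetic (explicitly writing the non-conservative form of the momentum equation and slightly different Young's constants).
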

\begin{proof}
Direct computation gives
\[
\begin{aligned}
\frac12 \frac{d}{dt}\|u\|_{L^2}^2 + \|u\|_{L^2}^2 &= \frac12 \intr (\nabla \cdot u)|u|^2\,dx + \intr u \cdot v\,dx\\
&\le C\|\nabla u\|_{L^\infty}\|u\|_{L^2}^2 + \|u\|_{L^2}\|v\|_{L^2}\\
&\le  \lt(\frac14 +C\|\nabla u\|_{L^\infty}\rt)\|u\|_{L^2}^2  + \|v\|_{L^2}^2\\
&\le \lt(\frac14 +C\e_1\rt)\|u\|_{L^2}^2  + \|v\|_{L^2}^2.
\end{aligned}
\]
We then choese $\e_1$ sufficiently small so that $1/4 + C\e_1 < \frac12$ to get
\[
\|u(\cdot,t)\|_{L^2}^2 + \int_0^t \|u(\cdot,\tau)\|_{L^2}^2 \,d\tau\le \|u_0\|_{L^2}^2 +  2\int_0^t \|v(\cdot,\tau)\|_{L^2}^2\,d\tau.
\]
On the other hand, Proposition \ref{P3.1} with the choice $\alpha>1$ implies
\[
\int_0^t \|v(\cdot,\tau)\|_{L^2}^2\,d\tau \le C(E(0)+\|v_0\|_{L^1}^2) \int_0^t (1+\tau)^{-\alpha}\,d\tau \le C(E(0)+\|v_0\|_{L^1}^2).
\]
This together with the fact $E(0) \le C\mathfrak{X}_0(s)$ concludes the desired result.
\end{proof}

\begin{lemma}\label{L3.3} 
For $r \in (0,3/4)$ and $1\le k \le s+1$, there exists a constant $C>0$ independent of $T$ such that
\[
\|(1+t)^r \nabla^k u(\cdot,t)\|_{L^2}^2 + \int_0^t \|(1+\tau)^r \nabla^k u(\cdot,\tau)\|_{L^2}^2\,d\tau \le C\lt(\|\nabla^k u_0\|_{L^2}^2+ \int_0^t \|(1+\tau)^r \nabla^k v(\cdot,\tau)\|_{L^2}^2\,d\tau\rt).
\]
\end{lemma}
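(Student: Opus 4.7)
The plan is to derive a closed equation for $u$ alone, run an $H^k$ energy estimate that exploits the drag term $-u$ as damping, and then multiply by the weight $(1+t)^{2r}$ and absorb the derivative of the weight into the effective damping; a careful use of Young's inequality will be what makes the full range $r<3/4$ accessible. Combining the continuity and momentum equations in \eqref{A-1} and dividing by $\rho>0$, I would first observe that
\[
\partial_t u + (u\cdot\nabla)u + u = v.
\]

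Next, I apply $\nabla^k$ for $1\le k\le s+1$ and take the $L^2$ inner product with $\nabla^k u$ to arrive at
\[
\tfrac12\tfrac{d}{dt}\|\nabla^k u\|_{L^2}^2 + \|\nabla^k u\|_{L^2}^2 = -\int \nabla^k[(u\cdot\nabla)u]\cdot\nabla^k u\,dx + \int \nabla^k v\cdot \nabla^k u\,dx.
\]
For the convection, I would split $\nabla^k[(u\cdot\nabla)u] = (u\cdot\nabla)\nabla^k u + [\nabla^k, u\cdot\nabla]u$; integration by parts on the first piece produces $\tfrac12\int(\nabla\cdot u)|\nabla^k u|^2\,dx$, Lemma \ref{lem_moser}(i) handles the commutator, and the Sobolev embedding $\|\nabla u\|_{L^\infty}\ls\|u\|_{H^{s+2}}\ls\e_1$ (Lemma \ref{lem_moser}(ii), using $s+1\ge[d/2]+2$ and the a priori smallness) together bound the whole convection contribution by $C\e_1\|\nabla^k u\|_{L^2}^2$. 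For the source term I would apply Young's inequality with the \emph{asymmetric} split $\int \nabla^k v\cdot \nabla^k u\,dx \le \tfrac14\|\nabla^k u\|_{L^2}^2 + \|\nabla^k v\|_{L^2}^2$, which yields the unweighted differential inequality
\[
\tfrac{d}{dt}\|\nabla^k u\|_{L^2}^2 + (3/2 - C\e_1)\|\nabla^k u\|_{L^2}^2 \le 2\|\nabla^k v\|_{L^2}^2.
\]

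Finally, I multiply by $(1+t)^{2r}$ and use the identity $\tfrac{d}{dt}[(1+t)^{2r}\|\nabla^k u\|_{L^2}^2] = (1+t)^{2r}\tfrac{d}{dt}\|\nabla^k u\|_{L^2}^2 + 2r(1+t)^{2r-1}\|\nabla^k u\|_{L^2}^2$ to reorganize into
\[
\tfrac{d}{dt}\bigl[(1+t)^{2r}\|\nabla^k u\|_{L^2}^2\bigr] + \bigl(3/2 - C\e_1 - \tfrac{2r}{1+t}\bigr)(1+t)^{2r}\|\nabla^k u\|_{L^2}^2 \le 2(1+t)^{2r}\|\nabla^k v\|_{L^2}^2.
\]
Since $\tfrac{2r}{1+t}\le 2r < 3/2$ when $r<3/4$, choosing $\e_1$ small enough makes the coefficient on the left uniformly positive, and integration from $0$ to $t$ produces both claimed bounds with constants independent of $T$. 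The delicate point is precisely the condition $r<3/4$: it is exactly what forces the asymmetric Young split so that the effective damping reaches $3/2$ (a symmetric $\tfrac12+\tfrac12$ split would only give $r<1/2$). Pushing $r$ closer to $1$ would presumably require a different mechanism that exploits the already-established polynomial decay of $v$ from Proposition \ref{P3.1}.
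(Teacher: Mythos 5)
Your proof is correct and follows essentially the same route as the paper: reduce the momentum equation to $\partial_t u + (u\cdot\nabla)u + u = v$, run a weighted $\dot H^k$ energy estimate using the drag term as damping, control the convection via the Moser commutator estimate together with $\|\nabla u\|_{L^\infty}\lesssim \e_1$, and absorb the weight derivative $2r(1+t)^{-1}$ into that damping. The only (cosmetic) difference is the choice of Young coefficient: the paper uses $7/8-r$ in place of your fixed $1/4$, which leaves a damping margin of $1/8$ uniformly over $r\in(0,3/4)$, whereas your margin $3/2-2r-C\e_1$ degenerates as $r\to 3/4^{-}$ and so makes the required smallness of $\e_1$ depend on $r$ --- harmless here, since the lemma is only invoked for fixed values of $r$.
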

\begin{proof}
From the momentum equation in the pressureless Euler equation in \eqref{A-1}, we can get
\[
\pa_t \lt((1+t)^r \nabla^k u\rt) + (1+t)^r \nabla^k (u \cdot \nabla u) = (1+t)^r \nabla^k (v-u) + r(1+t)^{r-1} \nabla^k u.
\]
With $r+1/8 \in (1/8, 7/8)$ in mind,  we use Young's inequality to obtain
$$\begin{aligned}
&\frac12\frac{d}{dt}\|(1+t)^r \nabla^k u\|_{L^2}^2 \cr
&\quad = -(1+t)^{2r}\intr \lt(u \cdot \nabla (\nabla^k u)\rt) \nabla^k u\,dx  - (1+t)^{2r}\intr \lt(\nabla^k (u \cdot \nabla u) - u \cdot \nabla (\nabla^k u)\rt)  \nabla^k u\,dx\\
&\qquad -(1+t)^{2r}\|\nabla^k u\|_{L^2}^2 + (1+t)^{2r} \intr \nabla^k u  \nabla^k v\,dx + r(1+t)^{2r-1}\|\nabla^k u\|_{L^2}^2\\
&\quad \le C\|\nabla u\|_{L^\infty}\|(1+t)^r \nabla^k u\|_{L^2}^2 -(1+t)^{2r}\|\nabla^k u\|_{L^2}^2 + (7/8-r)(1+t)^{2r}\|\nabla^k u\|_{L^2}^2\\
&\qquad + C\|(1+t)^r \nabla^k v\|_{L^2}^2 + r(1+t)^{2r-1}\|\nabla^k u\|_{L^2}^2\\
&\quad\le C\|\nabla u\|_{L^\infty}\|(1+t)^r \nabla^k u\|_{L^2}^2 - \lt(\frac{(r+1/8)(1+t) -r}{1+t}\rt)\|(1+t)^r\nabla^k u\|_{L^2}^2 + C\|(1+t)^r \nabla^k v\|_{L^2}^2\\
&\quad\le -(1/8 - C\e_1)\|(1+t)^r \nabla^k u\|_{L^2}^2+ C\|(1+t)^r \nabla^k v\|_{L^2}^2,
\end{aligned}$$
where $C>0$ is independent of $T$. Here, we can choose $\e_1$ sufficiently small so that $1/8 - C\e_1 >1/16$. Thus, we integrate the previous relation with respect to $t$ to get the desired result.
\end{proof}
 
Next, we get the uniform-in-time estimate for $v$.

\begin{lemma}\label{L3.4}
For $r \in (0,3/4)$ and $1\le k \le s$, there exists a constant $C>0$ independent of $T$ such that 
\[
\begin{aligned}
\|(&1+t)^r \nabla^k v(\cdot,t)\|_{L^2}^2 + \int_0^t \|(1+\tau)^r \nabla^{k+1} v(\cdot,\tau)\|_{L^2}^2\,d\tau \\
& \le \|\nabla^k v_0\|_{L^2}^2 + C\sum_{\ell=1}^k \lt(\int_0^t \|(1+\tau)^r \nabla^\ell u(\cdot,\tau)\|_{L^2}^2\,d\tau + \int_0^t \|(1+\tau)^r \nabla^\ell v(\cdot,\tau)\|_{L^2}^2\,d\tau\rt).
\end{aligned}
\]
\end{lemma}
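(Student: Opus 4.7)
The plan is to mimic the weighted energy argument of Lemma \ref{L3.3}, now applied to the incompressible Navier--Stokes equation in \eqref{A-1}. I would apply $\nabla^k$ to that equation, take the $L^2$ inner product with $(1+t)^{2r}\nabla^k v$, and integrate over $\R^d$. The pressure term drops out because $\nabla \cdot \nabla^k v = 0$; the leading convective integral $\int (v\cdot\nabla)\nabla^k v \cdot \nabla^k v\,dx$ vanishes after integration by parts by incompressibility; the viscosity delivers the dissipation $(1+t)^{2r}\|\nabla^{k+1} v\|_{L^2}^2$ on the left-hand side; and differentiating the weight produces the additional right-hand side term $r(1+t)^{2r-1}\|\nabla^k v\|_{L^2}^2$.

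The convection commutator $[\nabla^k, v\cdot\nabla]v$ is controlled by Lemma \ref{lem_moser}(i) together with Sobolev embedding, giving $\|[\nabla^k, v\cdot\nabla]v\|_{L^2} \le C\|\nabla v\|_{L^\infty}\|\nabla^k v\|_{L^2} \le C\e_1\|\nabla^k v\|_{L^2}$, absorbable for small $\e_1$. For the drag I would split
\[
\nabla^k(\rho(u-v)) = \rho\,\nabla^k(u-v) + [\nabla^k,\rho](u-v),
\]
whose pairing against $\nabla^k v$ equals $\int \rho\,\nabla^k u\cdot\nabla^k v\,dx - \int \rho|\nabla^k v|^2\,dx + \int [\nabla^k,\rho](u-v)\cdot\nabla^k v\,dx$. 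The middle integral is nonpositive since $\rho\ge 0$; the first is bounded by $C\e_1(\|\nabla^k u\|_{L^2}^2 + \|\nabla^k v\|_{L^2}^2)$ using $\|\rho\|_{L^\infty}\le C\|\rho\|_{H^s}\le C\e_1$; and the commutator is estimated via the second form of Lemma \ref{lem_moser}(i), producing $C(\|\nabla \rho\|_{L^\infty}\|\nabla^{k-1}(u-v)\|_{L^2} + \|u-v\|_{L^\infty}\|\nabla^k\rho\|_{L^2})\|\nabla^k v\|_{L^2}$.

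Unlike Lemma \ref{L3.3}, I cannot absorb the weight-derivative term $r(1+t)^{2r-1}\|\nabla^k v\|_{L^2}^2$ into the dissipation, which controls $\nabla^{k+1}v$ rather than $\nabla^k v$, and Poincar\'e is unavailable in $\R^d$. Instead I bound $(1+t)^{2r-1}\le (1+t)^{2r}$ and transfer the term to the right-hand side as the $\ell=k$ summand permitted by the statement. Choosing $\e_1$ sufficiently small to absorb all $C\e_1\|(1+t)^r\nabla^k v\|_{L^2}^2$ nonlinear contributions into the dissipation, integrating in time then produces the asserted inequality, with lower-order $\ell<k$ contributions in the sum arising from $\|\nabla^{k-1}(u-v)\|_{L^2}$ in the commutator combined with Sobolev embedding.

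The main technical delicacy is the commutator factor $\|u-v\|_{L^\infty}\|\nabla^k\rho\|_{L^2}$: since $\|\nabla^k\rho\|_{L^2}$ enjoys no time decay in the pressureless setting, a crude application of Young's inequality here would introduce a contribution of the form $\e_1^2(1+t)^{2r}$ whose time integral diverges. The resolution is to invoke Sobolev embedding, $\|u-v\|_{L^\infty}\le C\|\nabla(u-v)\|_{H^{[d/2]}}\le C\sum_\ell(\|\nabla^\ell u\|_{L^2} + \|\nabla^\ell v\|_{L^2})$, together with $\|\nabla^k\rho\|_{L^2}\le C\e_1$, so that after Young's the small coefficient $C\e_1$ multiplies exactly the $\ell$-th order $L^2$ norms of $u$ and $v$ permitted on the right-hand side of the lemma --- the range $s\ge 2[d/2]+1$ being precisely what is needed to place all the Sobolev-generated derivatives inside the admissible range $1 \le \ell \le k$.
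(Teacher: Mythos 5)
Your energy scaffolding is the same as the paper's: apply $\nabla^k$ to the Navier--Stokes equation, test against $(1+t)^{2r}\nabla^k v$, kill the pressure and the leading transport term by incompressibility, estimate the convection commutator via Lemma \ref{lem_moser}, and send the weight-derivative term $r(1+t)^{2r-1}\|\nabla^k v\|_{L^2}^2$ to the $\ell=k$ summand. Where you genuinely diverge is the drag term. The paper integrates by parts once, writing $\intr \nabla^k(\rho(u-v))\cdot\nabla^k v\,dx=-\intr \nabla^{k-1}(\rho(u-v))\cdot\nabla^{k+1}v\,dx$, expands $\nabla^{k-1}(\rho(u-v))$ by Leibniz, and absorbs $\tfrac12\|(1+t)^r\nabla^{k+1}v\|_{L^2}^2$ into the dissipation, so that at most $k-1$ derivatives ever fall on $\rho(u-v)$. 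You instead keep the pairing with $\nabla^k v$, split off $\rho\,\nabla^k(u-v)$, and exploit the sign of $-\intr \rho|\nabla^k v|^2\,dx$. Both routes are viable, and both must ultimately route the zeroth-order factor $u-v$ through $\|u-v\|_{L^\infty}\le C\|\nabla(u-v)\|_{H^{[d/2]}}$ for exactly the reason you identify: $\|\nabla^k\rho\|_{L^2}$ does not decay, so any leftover factor $(1+t)^{2r}$ with no decaying partner is fatal.

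The one place your argument, as written, breaks is $k=1$. The commutator bound of Lemma \ref{lem_moser}(i) that you invoke returns $C\lt(\|\nabla\rho\|_{L^\infty}\|\nabla^{k-1}(u-v)\|_{L^2}+\|u-v\|_{L^\infty}\|\nabla^k\rho\|_{L^2}\rt)$; for $k=1$ the first summand is $\|\nabla\rho\|_{L^\infty}\|u-v\|_{L^2}$, a zeroth-order $L^2$ norm that is not among the admissible terms $1\le\ell\le k$. By your own diagnosis this is dangerous: Young's inequality leaves $C\e_1(1+\tau)^{2r}\|u(\cdot,\tau)\|_{L^2}^2$, and at this stage one only knows $\int_0^t\|u\|_{L^2}^2\,d\tau<\infty$ from Lemma \ref{L3.2} (the decay of $\|u\|_{L^2}$ is proved only after global existence), so the weighted time integral is uncontrolled and the $k=1$ case --- the base case of the induction in Corollary \ref{C3.1} --- collapses. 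The fix is immediate: for $k=1$ the commutator is literally $\nabla\rho\otimes(u-v)$, so bound it by $\|\nabla\rho\|_{L^2}\|u-v\|_{L^\infty}$ and treat it like your second term. Separately, your closing claim that $s\ge 2[d/2]+1$ is ``precisely'' what places all Sobolev-generated derivatives inside $1\le\ell\le k$ is not accurate: $\|\nabla(u-v)\|_{H^{[d/2]}}$ involves derivatives up to order $[d/2]+1$, which exceeds $k$ whenever $k\le[d/2]$; the hypothesis on $s$ is what bounds $\|\nabla^j\rho\|_{L^\infty}$ by $\|\rho\|_{H^s}$, not what controls this range. The paper's own Leibniz expansion faces the same low-$k$ wrinkle with its $\ell=0$ term, so this is shared rather than specific to your route, but it needs an explicit word when the estimate is fed into the induction.
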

\begin{proof}
We deduce from the Navier--Stokes equations in \eqref{A-1} that
\begin{align*}
\frac12&\frac{d}{dt}\|(1+t)^r \nabla^k v\|_{L^2}^2 + \|(1+t)^r \nabla^{k+1}v\|_{L^2}^2\\
&= -(1+t)^{2r} \intr \lt(\nabla^k (v \cdot \nabla v) - v \cdot \nabla (\nabla^k v)\rt)  \nabla^k v\,dx +r(1+t)^{2r-1}\|\nabla^k v\|_{L^2}^2\\
&\quad - (1+t)^{2r}\intr \nabla^{k-1}(\rho(u-v)) \nabla^{k+1} v\,dx\\
&\le C\|\nabla v\|_{L^\infty} \|(1+t)^r \nabla^k v\|_{L^2}^2 + \frac{r}{1+t}\|(1+t)^r \nabla^k v\|_{L^2}^2\\
&\quad + C(1+t)^{2r} \sum_{0 \leq \ell \leq k-1} \intr |\nabla^{k-1-\ell} \rho| |\nabla^\ell (u-v)| |\nabla^{k+1}v|\,dx.
\end{align*}
On the other hand, we estimate
$$\begin{aligned}
&\lt(\intr |\nabla^{k-1-\ell} \rho|^2 |\nabla^\ell (u-v)|^2\,dx \rt)^{1/2} \cr
&\quad \leq \left\{ \begin{array}{ll}
 \|\nabla^{k-1-\ell} \rho\|_{L^\infty}\|\nabla^\ell (u-v)\|_{L^2} & \textrm{for $|k-1-\ell| \leq [d/2]$,}\\
  \|\nabla^{k-1-\ell} \rho\|_{L^2}\|\nabla^\ell (u-v)\|_{L^\infty} & \textrm{for $|k-1-\ell| \geq [d/2] + 1$}.
\end{array} \right.
\end{aligned}$$
This together with applying Lemma \ref{lem_moser} (ii) gives
$$\begin{aligned}
&C(1+t)^{2r} \sum_{0 \leq \ell \leq k-1} \intr |\nabla^{k-1-\ell} \rho| |\nabla^\ell (u-v)| |\nabla^{k+1}v|\,dx\cr
&\quad \leq C \sum_{1 \leq \ell \leq k} \|(1+t)^r\nabla^\ell (u-v)\|_{L^2}^2 + \frac12\|(1+t)^r\nabla^{k+1}v\|_{L^2}^2.
\end{aligned}$$
Hence we have
\[
\frac{d}{dt}\|(1+t)^r \nabla^k v\|_{L^2}^2 + \|(1+t)^r \nabla^{k+1}v\|_{L^2}^2 \le C\sum_{\ell=1}^k \lt(\|(1+t)^r \nabla^\ell u\|_{L^2}^2 +\|(1+t)^r \nabla^\ell v\|_{L^2}^2\rt),
\]
and our desired result directly follows from the above estimate.
\end{proof}

Then we combine two previous lemmas to yield the following uniform-in-time estimates.

\begin{corollary}\label{C3.1}
For $r \in (0,3/4)$, there exists a constant $C>0$ independent of $T$ such that 
\[\begin{aligned}
&\|(1+t)^r \nabla u(\cdot,t)\|_{H^s}^2 + \|(1+t)^r \nabla v(\cdot,t)\|_{H^{s-1}}^2 \cr
&\quad + \int_0^t \|(1+\tau)^r \nabla u(\cdot,\tau)\|_{H^s}^2\,d\tau + \int_0^t \|(1+\tau)^r \nabla v(\cdot,\tau)\|_{H^s}^2\,d\tau\\
&\qquad \le C\lt(\mathfrak{X}_0(s) + \|v_0\|_{L^1}^2\rt).
\end{aligned}\]
\end{corollary}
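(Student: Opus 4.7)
The plan is to cascade through derivative orders: Proposition~\ref{P3.1} furnishes the lowest-order time-weighted dissipation estimate for $\nabla v$, Lemma~\ref{L3.3} trades integrals of $\nabla^k v$ for control of $\nabla^k u$, and Lemma~\ref{L3.4} trades integrals of lower-order derivatives of $u$ and $v$ for control of $\nabla^k v$ together with an integral of $\nabla^{k+1}v$. Because these estimates interlock at consecutive orders, I run a finite induction on $k$ in ascending order, seeded by Proposition~\ref{P3.1}. Since $r \in (0,3/4)$ and $d \ge 3$, we have $2r < d/2$, so Proposition~\ref{P3.1} with $\alpha = 2r$ gives
\[
\int_0^t (1+\tau)^{2r}\|\nabla v(\cdot,\tau)\|_{L^2}^2\,d\tau \le C\lt(E(0)+\|v_0\|_{L^1}^2\rt) \le C\lt(\mathfrak{X}_0(s)+\|v_0\|_{L^1}^2\rt),
\]
where in the last step I bound $E(0) \lesssim \|\rho_0\|_{L^\infty}\|u_0\|_{L^2}^2 + \|v_0\|_{L^2}^2 \lesssim \mathfrak{X}_0(s)$ via Lemma~\ref{lem_moser}(ii) and $s \ge 2[d/2]+1$.

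Next, I establish by induction on $k$ that for $1 \le k \le s+1$,
\[
\|(1+t)^r\nabla^k u(\cdot,t)\|_{L^2}^2 + \int_0^t \|(1+\tau)^r \nabla^k u(\cdot,\tau)\|_{L^2}^2\,d\tau \le C\lt(\mathfrak{X}_0(s)+\|v_0\|_{L^1}^2\rt),
\]
and for $1 \le k \le s$,
\[
\|(1+t)^r\nabla^k v(\cdot,t)\|_{L^2}^2 + \int_0^t \|(1+\tau)^r \nabla^{k+1} v(\cdot,\tau)\|_{L^2}^2\,d\tau \le C\lt(\mathfrak{X}_0(s)+\|v_0\|_{L^1}^2\rt).
\]
For $k=1$: Lemma~\ref{L3.3} with $k=1$ consumes the seed $v$-integral from the previous paragraph to deliver the $u$-bound, and then Lemma~\ref{L3.4} with $k=1$ consumes this $u$-integral together with the same seed $v$-integral to deliver the $v$-bound. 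For the induction step from $k-1$ to $k$ (with $k \le s$), the integral $\int_0^t \|(1+\tau)^r\nabla^k v\|_{L^2}^2\,d\tau$ appears on the LHS of the $v$-bound at order $k-1$ and is therefore already controlled; Lemma~\ref{L3.3} at order $k$ then yields the $u$-bound, and Lemma~\ref{L3.4} at order $k$ yields the $v$-bound, since every integral on its RHS involves $\nabla^\ell$ with $\ell \le k$ and has already been bounded. The terminal step $k=s+1$ for $u$ requires only Lemma~\ref{L3.3}, fed by the $v$-bound at order $s$.

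Assembling: summing the pointwise parts over $k$ produces $\|(1+t)^r\nabla u\|_{H^s}^2$ and $\|(1+t)^r\nabla v\|_{H^{s-1}}^2$, while summing the time-integrated parts and augmenting with the seed integral of $\|(1+\tau)^r\nabla v\|_{L^2}^2$ produces $\int_0^t\|(1+\tau)^r\nabla u\|_{H^s}^2\,d\tau$ and $\int_0^t\|(1+\tau)^r\nabla v\|_{H^s}^2\,d\tau$. The main obstacle is not analytical but organizational: one must interleave the applications of Lemmas~\ref{L3.3} and~\ref{L3.4} so that each invocation of one lemma consumes a time integral that the previous step of the other lemma has just manufactured, thereby propagating the $L^1$-driven decay produced by Proposition~\ref{P3.1} up through the Sobolev scale without ever needing to absorb a higher-order $v$-derivative on the right via the smallness of $\e_1$.
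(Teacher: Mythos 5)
Your proposal is correct and follows essentially the same route as the paper: seed with Proposition \ref{P3.1} (applied with $\alpha=2r<d/2$) to control $\int_0^t\|(1+\tau)^r\nabla v\|_{L^2}^2\,d\tau$, then run an ascending induction that alternates Lemma \ref{L3.3} and Lemma \ref{L3.4} so each step consumes the time integral produced by the previous one, finishing with Lemma \ref{L3.3} at order $s+1$. The only difference is bookkeeping — the paper packages the induction as a single combined statement indexed by $\ell$ and applies the two lemmas in a slightly different order within each step — but the interlocking cascade is identical.
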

\begin{proof}
We proceed by induction to show that
\bq\label{C3-1.0}
\begin{aligned}
&\|(1+t)^r \nabla u(\cdot,t)\|_{H^\ell}^2 + \|(1+t)^r \nabla v(\cdot,t)\|_{H^{\ell-1}}^2 \cr
&\quad + \int_0^t \|(1+\tau)^r \nabla u(\cdot,\tau)\|_{H^\ell}^2\,d\tau+ \int_0^t \|(1+\tau)^r \nabla v(\cdot,\tau)\|_{H^\ell}^2\,d\tau\\
&\qquad \le C\lt(\mathfrak{X}_0(s) + \|v_0\|_{L^1}^2\rt)
\end{aligned}\eq
for $1 \le \ell \le s$, where $C>0$ is independent of $T$. Let us first show that \eqref{C3-1.0} holds with $\ell=1$. It follows from Proposition \ref{P3.1} that
\bq\label{C3-1.1}
\int_0^t \|(1+\tau)^r \nabla v(\cdot,\tau)\|_{L^2}^2\,d\tau \le C\lt(E(0)+\|v_0\|_{L^1}^2\rt) \le C\lt(\mathfrak{X}_0(s) + \|v_0\|_{L^1}^2\rt).
\eq
Then we combine \eqref{C3-1.1} with Lemma \ref{L3.3} with $k=1$ to obtain
\bq\label{C3-1.2}
\begin{aligned}
\|(1+t)^r \nabla u(\cdot,t)\|_{L^2}^2 + \int_0^t \|(1+\tau)^r \nabla u(\cdot,\tau)\|_{L^2}^2\,d\tau 
&\le C\lt(\|\nabla u_0\|_{L^2}^2+ \int_0^t \|(1+\tau)^r \nabla v(\cdot,\tau)\|_{L^2}^2\,d\tau\rt)\\
&\le C\lt( \mathfrak{X}_0(s) + \|v_0\|_{L^1}^2\rt).
\end{aligned}
\eq
As a consequence, we apply \eqref{C3-1.1} and \eqref{C3-1.2} to Lemma \ref{L3.4} with $k=1$ to yield
\bq\label{C3-1.3}\begin{aligned}
\|(&1+t)^r \nabla v(\cdot,t)\|_{L^2}^2 + \int_0^t \|(1+\tau)^r \nabla^2 v(\cdot,\tau)\|_{L^2}^2\,d\tau \\
& \le \|\nabla v_0\|_{L^2}^2 + C \lt(\int_0^t \|(1+\tau)^r \nabla u(\cdot,\tau)\|_{L^2}^2\,d\tau + \int_0^t \|(1+\tau)^r \nabla v(\cdot,\tau)\|_{L^2}^2\,d\tau\rt)\\
&\le C\lt(\mathfrak{X}_0(s) + \|v_0\|_{L^1}^2\rt).
\end{aligned}\eq
This again leads to
\bq\label{C3-1.4}
\begin{aligned}
&\|(1+t)^r \nabla^2 u(\cdot,t)\|_{L^2}^2 + \int_0^t \|(1+\tau)^r \nabla^2 u(\cdot,\tau)\|_{L^2}^2\,d\tau \cr
&\quad \le C\lt(\|\nabla^2 u_0\|_{L^2}^2+ \int_0^t \|(1+\tau)^r \nabla^2 v(\cdot,\tau)\|_{L^2}^2\,d\tau\rt)\cr
&\quad \le C\lt( \mathfrak{X}_0(s) + \|v_0\|_{L^1}^2\rt)
\end{aligned}
\eq
due to Lemma \ref{L3.3} with $k=2$.
Hence, we combine \eqref{C3-1.1}-\eqref{C3-1.4} to assert that \eqref{C3-1.0} holds when $\ell=1$.\\

\noindent Now, we assume that \eqref{C3-1.0} holds for some $s>\ell =m\ge1$. Then, Lemma \ref{L3.4} with $k=m+1 \leq s$ gives
\bq\label{C3-1.5}
\begin{aligned}
\|(&1+t)^r \nabla^{m+1} v(\cdot,t)\|_{L^2}^2 + \int_0^t \|(1+\tau)^r \nabla^{m+2} v(\cdot,\tau)\|_{L^2}^2\,d\tau \\
& \le \|\nabla^{m+1} v_0\|_{L^2}^2 + C\sum_{\ell=1}^{m+1} \lt(\int_0^t \|(1+\tau)^r \nabla^\ell u(\cdot,\tau)\|_{L^2}^2\,d\tau + \int_0^t \|(1+\tau)^r \nabla^\ell v(\cdot,\tau)\|_{L^2}^2\,d\tau\rt)\\
&\le C\lt(\mathfrak{X}_0(s) + \|v_0\|_{L^1}^2\rt).
\end{aligned}
\eq
Next, we apply \eqref{C3-1.5} to Lemma \ref{L3.3} with $k=m+2 \leq s+1$ to obtain
\bq\label{C3-1.6}
\begin{aligned}
\|&(1+t)^r \nabla^{m+2} u(\cdot,t)\|_{L^2}^2 + \int_0^t \|(1+\tau)^r \nabla^{m+2} u(\cdot,\tau)\|_{L^2}^2\,d\tau\\
& \le C\lt(\|\nabla^{m+2} u_0\|_{L^2}^2+ \int_0^t \|(1+\tau)^r \nabla^{m+2} v(\cdot,\tau)\|_{L^2}^2\,d\tau\rt)\\
&\le C\lt(\mathfrak{X}_0(s) + \|v_0\|_{L^1}^2\rt).
\end{aligned}
\eq
Thus, we combine \eqref{C3-1.5} and \eqref{C3-1.6} with the induction hypothesis to assert that \eqref{C3-1.0} holds when $\ell=m+1 \leq s$. This completes the inductive argument and gives the desired result.
\end{proof}

In the following two lemmas, we provide the estimates for $v$ and $u$ in $L^\infty(0,T;\dot H^{s+1}(\R^d)) \cap L^2(0,T;\dot H^{s+2}(\R^d))$ and $L^\infty(0,T;\dot H^{s+2}(\R^d)) \cap L^2(0,T;\dot H^{s+2}(\R^d))$, respectively, which are uniform in $T$. 
\begin{lemma}\label{L3.5}
There exists a constant $C>0$ independent of $T$ such that
\[
\|\nabla^{s+1} v(\cdot,t)\|_{L^2}^2 + \int_0^t \|\nabla^{s+2} v(\cdot,\tau)\|_{L^2}^2\,d\tau \le C\lt(\mathfrak{X}_0(s) + \|v_0\|_{L^1}^2\rt).
\]
\end{lemma}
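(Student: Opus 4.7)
The plan is a standard $\dot H^{s+1}$ energy estimate on the Navier--Stokes block, with the two new features being (i) the convection term handled by a commutator plus the divergence-free condition, and (ii) the coupling term handled by one integration by parts and Moser's product rule so that half of the viscous dissipation can absorb the bad factor. The decisive point is that every remaining time integral on the right-hand side is already controlled by the previously established estimates (Lemma \ref{L3.2}, Proposition \ref{P3.1}, Corollary \ref{C3.1}); no new large-time decay is needed here.

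Concretely, I would apply $\nabla^{s+1}$ to the equation $\partial_t v + (v\cdot\nabla)v + \nabla p - \Delta v = \rho(u-v)$, take the $L^2$ inner product with $\nabla^{s+1} v$, and note that the pressure term drops because $\nabla\cdot v = 0$. This yields
\[
\tfrac{1}{2}\tfrac{d}{dt}\|\nabla^{s+1}v\|_{L^2}^2 + \|\nabla^{s+2}v\|_{L^2}^2 = I_1 + I_2,
\]
with $I_1 = -\int \nabla^{s+1}(v\cdot\nabla v)\cdot\nabla^{s+1}v\,dx$ and $I_2 = \int \nabla^{s+1}(\rho(u-v))\cdot\nabla^{s+1}v\,dx$. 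For $I_1$, since $\int v\cdot\nabla(\nabla^{s+1}v)\cdot\nabla^{s+1}v\,dx = 0$ by divergence-freeness, I rewrite it as a commutator and apply the second Moser inequality in Lemma \ref{lem_moser}(i) to bound $|I_1| \le C\|\nabla v\|_{L^\infty}\|\nabla^{s+1}v\|_{L^2}^2 \le C\varepsilon_1\|\nabla^{s+1}v\|_{L^2}^2$, where the $L^\infty$ control of $\nabla v$ comes from Lemma \ref{lem_moser}(ii) together with $s+1 \ge [d/2]+2$ and the smallness assumption $\mathfrak{X}(s;T) \le \varepsilon_1^2$.

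For $I_2$, I integrate by parts once to get $I_2 = -\int \nabla^s(\rho(u-v))\cdot\nabla^{s+2}v\,dx$, then Cauchy--Schwarz and Young's inequality give $|I_2| \le \tfrac{1}{2}\|\nabla^{s+2}v\|_{L^2}^2 + \tfrac{1}{2}\|\nabla^s(\rho(u-v))\|_{L^2}^2$. The first Moser product inequality in Lemma \ref{lem_moser}(i) yields
\[
\|\nabla^s(\rho(u-v))\|_{L^2}^2 \le C\|\rho\|_{L^\infty}^2(\|\nabla^s u\|_{L^2}^2 + \|\nabla^s v\|_{L^2}^2) + C\|\nabla^s\rho\|_{L^2}^2(\|u\|_{L^\infty}^2 + \|v\|_{L^\infty}^2),
\]
and using Lemma \ref{lem_moser}(ii) together with the a priori smallness, each coefficient involving $\rho$ is $\le C\varepsilon_1^2$. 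Collecting everything and absorbing $\tfrac12\|\nabla^{s+2}v\|_{L^2}^2$ into the left-hand side, then integrating in time, produces
\[
\|\nabla^{s+1}v(t)\|_{L^2}^2 + \int_0^t \|\nabla^{s+2}v(\tau)\|_{L^2}^2\,d\tau \le \|\nabla^{s+1}v_0\|_{L^2}^2 + C\varepsilon_1\int_0^t\|\nabla^{s+1}v\|_{L^2}^2\,d\tau + C\varepsilon_1^2 \int_0^t J(\tau)\,d\tau,
\]
where $J = \|\nabla^s u\|_{L^2}^2 + \|\nabla^s v\|_{L^2}^2 + \|u\|_{L^\infty}^2 + \|v\|_{L^\infty}^2$.

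The main obstacle, and the one worth verifying carefully, is the time-integrability of the right-hand side. Using $(1+\tau)^{2r}\ge 1$ for $r\in(0,3/4)$, Corollary \ref{C3.1} bounds $\int_0^t\|\nabla^{s+1}v\|_{L^2}^2\,d\tau$ and $\int_0^t(\|\nabla^s u\|_{L^2}^2 + \|\nabla^s v\|_{L^2}^2)\,d\tau$ by $C(\mathfrak{X}_0(s)+\|v_0\|_{L^1}^2)$. For the $L^\infty$ pieces, Lemma \ref{lem_moser}(ii) with $s\ge [d/2]+1$ gives $\|u\|_{L^\infty}^2 \lesssim \|u\|_{L^2}^2 + \|\nabla u\|_{H^s}^2$ and similarly $\|v\|_{L^\infty}^2 \lesssim \|v\|_{L^2}^2 + \|\nabla v\|_{H^s}^2$; the $\|\nabla\cdot\|_{H^s}$ integrals are controlled by Corollary \ref{C3.1}, while $\int_0^t\|u\|_{L^2}^2\,d\tau$ is bounded by Lemma \ref{L3.2} and $\int_0^t\|v\|_{L^2}^2\,d\tau$ by Proposition \ref{P3.1} with $\alpha \in (1,d/2)$, which is permissible precisely because $d\ge 3$. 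Putting these pieces together and using $\|\nabla^{s+1}v_0\|_{L^2}^2 \le \mathfrak{X}_0(s)$ delivers the claimed bound.
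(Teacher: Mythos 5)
Your proposal is correct and follows essentially the same route as the paper: the same $\dot H^{s+1}$ energy identity with the commutator treatment of the convection term, the same single integration by parts plus Moser estimate on $\nabla^s(\rho(u-v))$ with half the dissipation absorbed, and the same closing of the time integrals via Proposition \ref{P3.1}, Lemma \ref{L3.2}, and Corollary \ref{C3.1}. The only cosmetic differences are that you split $u-v$ into its two pieces and track the $\varepsilon_1$ smallness in the coefficients, whereas the paper simply bounds $\|\nabla v\|_{L^\infty}$ and $\|\rho\|_{L^\infty}$ by a constant; both are fine since the relevant integrals are already uniformly bounded.
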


\begin{proof}
Straightforward computation yields
\[\begin{aligned}
\frac12&\frac{d}{dt}\|\nabla^{s+1} v\|_{L^2}^2 + \|\nabla^{s+2} v\|_{L^2}^2 \\
&= -\intr \lt(\nabla^{s+1} (v\cdot \nabla v) - v \cdot \nabla(\nabla^{s+1} v)\rt) \nabla^{s+1} v\,dx -\intr \nabla^s (\rho(u-v))\nabla^{s+2} v\,dx\\
&\le C\|\nabla v\|_{L^\infty} \|\nabla^{s+1} v\|_{L^2}^2 + \|\nabla^s (\rho(u-v)\|_{L^2}\|\nabla^{s+2} v\|_{L^2}\\
&\le C \|\nabla^{s+1} v\|_{L^2}^2 + C\|\nabla^{s+2} v\|_{L^2} \lt( \|\rho\|_{L^\infty}\|\nabla^s(u-v)\|_{L^2} + \|\nabla^s \rho\|_{L^2}\|u-v\|_{L^\infty}\rt)\\
&\le C\|\nabla^{s+1} v\|_{L^2}^2 + \frac12\|\nabla^{s+2} v\|_{L^2}^2 + C\lt(\|u\|_{H^s}^2 + \|v\|_{H^s}^2\rt),
\end{aligned}\]
where we used Sobolev inequality and Young's inequality. Thus, we integrate the above relation with respect to $t$ and use Proposition \ref{P3.1}, Lemma \ref{L3.2} and Corollary \ref{C3.1} to get
\[\begin{aligned}
\|&\nabla^{s+1} v(\cdot,t)\|_{L^2}^2 + \int_0^t \|\nabla^{s+2} v(\cdot,\tau)\|_{L^2}^2\,d\tau\\
&\le \|\nabla^{s+1} v_0\|_{L^2}^2 + C\int_0^t \|\nabla^{s+1} v(\cdot,\tau)\|_{L^2}^2\,d\tau + C\int_0^t\lt( \|v(\cdot,\tau)\|_{L^2}^2 + \|u(\cdot,\tau)\|_{L^2}^2\rt) \,d\tau\\
&\quad + C\int_0^t \|\nabla v(\cdot,\tau)\|_{H^{s-1}}^2\,d\tau + C\int_0^t \|\nabla u(\cdot,\tau)\|_{H^{s-1}}^2\,d\tau\\
&\le C\lt(\mathfrak{X}_0(s) + \|v_0\|_{L^1}^2\rt) + C\int_0^t \|(1+\tau)^{1/2}\nabla v(\cdot,\tau)\|_{H^{s-1}}^2\,d\tau + C\int_0^t \|(1+\tau)^{1/2}\nabla u(\cdot,\tau)\|_{H^{s-1}}^2\,d\tau\\
&\le C\lt(\mathfrak{X}_0(s) + \|v_0\|_{L^1}^2\rt),
\end{aligned}\]
which implies the desired estimate.
\end{proof}

\begin{lemma}\label{L3.6}
There exists a constant $C>0$ independent of $T$ such that
\[
\|\nabla^{s+2} u(\cdot,t)\|_{L^2}^2 + \int_0^t \|\nabla^{s+2} u(\cdot,\tau)\|_{L^2}^2\,d\tau \le C(\mathfrak{X}_0(s) + \|v_0\|_{L^1}^2).
\]
\end{lemma}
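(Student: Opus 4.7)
The plan is to mimic the proof of Lemma \ref{L3.3} at the top order $k=s+2$, but without the time weight $(1+t)^r$ (which is what makes the estimate possible up to the very top order). First I would rewrite the momentum equation in \eqref{A-1} in non-conservative form using the continuity equation, obtaining $\partial_t u + u\cdot\nabla u = -(u-v)$ pointwise wherever $\rho>0$. Applying $\nabla^{s+2}$ and pairing with $\nabla^{s+2}u$ in $L^2$ gives
\[
\frac12\frac{d}{dt}\|\nabla^{s+2}u\|_{L^2}^2 + \|\nabla^{s+2}u\|_{L^2}^2 = -\intr \nabla^{s+2}(u\cdot\nabla u)\cdot\nabla^{s+2}u\,dx + \intr \nabla^{s+2}v\cdot\nabla^{s+2}u\,dx.
\]

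The transport term I would split as $u\cdot\nabla(\nabla^{s+2}u)$ plus a commutator. After integration by parts, the first piece contributes $\tfrac12\int(\nabla\cdot u)|\nabla^{s+2}u|^2\,dx$, which is bounded by $C\|\nabla u\|_{L^\infty}\|\nabla^{s+2}u\|_{L^2}^2$; the commutator is controlled by the same quantity via Lemma \ref{lem_moser}(i) applied with $k=s+2$, $f=u$ and $g=\nabla u$. Since $s\ge 2[d/2]+1\ge [d/2]+1$, Lemma \ref{lem_moser}(ii) together with the a priori smallness assumption $\mathfrak{X}(s;T)\le \e_1^2$ yields $\|\nabla u\|_{L^\infty}\le C\|\nabla u\|_{H^{[d/2]}}\le C\e_1$, so both contributions can be absorbed into the damping on the left-hand side for $\e_1$ small enough. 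The coupling term I would bound by Young's inequality, $\int \nabla^{s+2}v\cdot\nabla^{s+2}u\,dx \le \tfrac14\|\nabla^{s+2}u\|_{L^2}^2 + C\|\nabla^{s+2}v\|_{L^2}^2$, with the first piece again absorbed.

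This leaves the differential inequality
\[
\frac{d}{dt}\|\nabla^{s+2}u\|_{L^2}^2 + \|\nabla^{s+2}u\|_{L^2}^2 \le C\|\nabla^{s+2}v\|_{L^2}^2,
\]
which I integrate from $0$ to $t$. The only remaining input is a uniform-in-time bound on $\int_0^t\|\nabla^{s+2}v(\cdot,\tau)\|_{L^2}^2\,d\tau$, and this is exactly the dissipation bound supplied by Lemma \ref{L3.5}. Combining everything produces
\[
\|\nabla^{s+2}u(\cdot,t)\|_{L^2}^2 + \int_0^t \|\nabla^{s+2}u(\cdot,\tau)\|_{L^2}^2\,d\tau \le C\bigl(\mathfrak{X}_0(s)+\|v_0\|_{L^1}^2\bigr),
\]
as claimed.

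There is no genuine obstacle here, and in a sense this is the easiest of the energy-type estimates in the section: in contrast to Lemma \ref{L3.3}, the absence of the weight $(1+t)^r$ means that no extra term $r(1+t)^{2r-1}\|\nabla^{s+2}u\|_{L^2}^2$ appears, so we do not need the restriction $r<3/4$ to close the estimate, and the linear damping $-\|\nabla^{s+2}u\|_{L^2}^2$ coming from the drag gives both the pointwise $L^\infty_tL^2_x$ and the integrated $L^2_tL^2_x$ bound at once. The proof is possible precisely because Lemma \ref{L3.5} has already been proved; this is the reason for the ordering (iv) of the scheme listed before Lemma \ref{L3.2}.
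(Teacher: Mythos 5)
Your proof is correct and follows essentially the same route as the paper: energy estimate on $\nabla^{s+2}u$ from the non-conservative momentum equation, Moser commutator bound absorbed via smallness of $\|\nabla u\|_{L^\infty}$, Young's inequality on the drag term, and integration in time against the dissipation of $\nabla^{s+2}v$. Your attribution of the final integral bound to Lemma \ref{L3.5} is in fact the right one (the paper cites Corollary \ref{C3.1} at that point, but that corollary only controls $\nabla^{s+1}v$ in $L^2_t L^2_x$, so Lemma \ref{L3.5} is what is actually needed), and your closing remark about the ordering of the lemmas is accurate.
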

\begin{proof}
Directly from the momentum equation in the Euler part of \eqref{A-1}, we obtain
\[\begin{aligned}
\frac12\frac{d}{dt}\|\nabla^{s+2} u\|_{L^2}^2 &= -\intr \lt(u \cdot \nabla (\nabla^{s+2} u)\rt) \nabla^{s+2} u \,dx -\intr \lt(\nabla^{s+2} (u\cdot \nabla u) - u \cdot \nabla(\nabla^{s+2} u)\rt) \nabla^{s+2} u\,dx\\
&\quad-\intr \nabla^{s+2} (u-v) \nabla^{s+2} u\,dx\\
&\le C\|\nabla u\|_{L^\infty}\|\nabla^{s+2} u\|_{L^2}^2  -\frac12\|\nabla^{s+2} u\|_{L^2}^2 + \|\nabla^{s+2} v\|_{L^2}^2\\
&\le -\lt(\frac12-C\e_1\rt) \|\nabla^{s+2} u\|_{L^2}^2  + \|\nabla^{s+2} v\|_{L^2}^2,
\end{aligned}\]
where $C$ is independent of $T$. Since $\e_1$ is sufficiently small, we can choose $1/2-C\e_1 >1/4$. Thus, we use Corollary \ref{C3.1} to get
$$\begin{aligned}
&\|\nabla^{s+2} u(\cdot,t)\|_{L^2}^2 +\int_0^t \|\nabla^{s+2} u(\cdot,\tau)\|_{L^2}^2\,d\tau \cr
&\quad \le \|\nabla^{s+2} u_0\|_{L^2}^2 + C\int_0^t \|\nabla^{s+2} v(\cdot,\tau)\|_{L^2}^2\,d\tau \le C\lt(\mathfrak{X}_0(s) + \|v_0\|_{L^1}^2\rt).
\end{aligned}$$
This concludes the desired result.
\end{proof}

We now estimate the fluid density $\rho$. We first show the uniform-in-time estimate on the upper bound for $\rho$, and then present the lower bound estimate. In particular, the lower bound estimate implies that the vacuum state cannot occur.
\begin{lemma}\label{C3.3}
There exists a constant $C>0$ independent of $T$ such that
\[
\|\rho(\cdot,t)\|_{H^s} \le \|\rho_0\|_{H^s} \exp\lt( C\lt(\sqrt{\mathfrak{X}_0(s)} + \|v_0\|_{L^1}\rt)\rt).
\]
\end{lemma}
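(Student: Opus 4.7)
The plan is to estimate $\|\rho\|_{H^s}$ directly from the continuity equation using an energy-type argument combined with Moser/commutator estimates, then close the Gr\"onwall bound by integrating $\|\nabla u(\cdot,\tau)\|_{H^s}$ in time via the time-weighted estimates already obtained in Corollary \ref{C3.1}.

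More concretely, I would first apply $\nabla^k$ to the continuity equation $\partial_t \rho + u\cdot\nabla \rho + \rho\,\nabla\cdot u = 0$ for each $0 \le k \le s$, multiply by $\nabla^k \rho$, integrate by parts (the transport term yields $\tfrac12\int(\nabla\cdot u)|\nabla^k\rho|^2\,dx$), and use Lemma \ref{lem_moser}(i) to control the commutator $[\nabla^k, u\cdot\nabla]\rho$ and the product term $\nabla^k(\rho\,\nabla\cdot u)$. This produces
\[
\frac{d}{dt}\|\nabla^k\rho\|_{L^2} \lesssim \|\nabla u\|_{L^\infty}\|\nabla^k\rho\|_{L^2} + \|\nabla\rho\|_{L^\infty}\|\nabla^k u\|_{L^2} + \|\rho\|_{L^\infty}\|\nabla^{k+1}u\|_{L^2}.
\]
Summing over $0 \le k \le s$ and absorbing the $L^\infty$ norms by Lemma \ref{lem_moser}(ii) (all three embeddings $\|\nabla u\|_{L^\infty} \lesssim \|\nabla u\|_{H^s}$, $\|\nabla\rho\|_{L^\infty} \lesssim \|\rho\|_{H^s}$, and $\|\rho\|_{L^\infty} \lesssim \|\rho\|_{H^s}$ hold because $s \ge 2[d/2]+1$), I obtain the clean differential inequality
\[
\frac{d}{dt}\|\rho(\cdot,t)\|_{H^s} \le C\,\|\nabla u(\cdot,t)\|_{H^s}\,\|\rho(\cdot,t)\|_{H^s}.
\]

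Gr\"onwall's inequality then yields
\[
\|\rho(\cdot,t)\|_{H^s} \le \|\rho_0\|_{H^s}\exp\left(C\int_0^t \|\nabla u(\cdot,\tau)\|_{H^s}\,d\tau\right),
\]
and the remaining task is to bound $\int_0^t \|\nabla u(\cdot,\tau)\|_{H^s}\,d\tau$ uniformly in $t$. For this I would fix $r \in (1/2, 3/4)$ and apply Cauchy--Schwarz:
\[
\int_0^t \|\nabla u\|_{H^s}\,d\tau \le \left(\int_0^\infty (1+\tau)^{-2r}\,d\tau\right)^{1/2}\left(\int_0^t (1+\tau)^{2r}\|\nabla u\|_{H^s}^2\,d\tau\right)^{1/2}.
\]
The first factor is finite since $2r > 1$, and the second factor is controlled by $C(\mathfrak{X}_0(s)+\|v_0\|_{L^1}^2)$ thanks to Corollary \ref{C3.1}. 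Using $\sqrt{a+b^2}\le \sqrt{a}+b$ for $a,b\ge 0$ converts this into $C(\sqrt{\mathfrak{X}_0(s)} + \|v_0\|_{L^1})$, giving the stated exponential.

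The main obstacle is not the derivation of the differential inequality itself, which is quite standard for transport equations with smooth velocity, but rather ensuring that the log-Gr\"onwall factor $\int_0^t\|\nabla u\|_{H^s}d\tau$ stays bounded as $t\to\infty$ despite $\nabla u$ not being known to decay pointwise in time. This hinges critically on choosing $r > 1/2$ inside the time-weighted estimate of Corollary \ref{C3.1}, which in turn relies on the polynomial decay rate $d/2 > 1$ obtained in Proposition \ref{P3.1}, explaining the assumption $d\ge 3$.
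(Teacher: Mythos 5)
Your proposal is correct and follows essentially the same route as the paper: the same $\nabla^k$ energy estimate with the Moser commutator bounds from Lemma \ref{lem_moser}, the same Gr\"onwall step, and the same Cauchy--Schwarz argument with a weight exponent in $(1/2,3/4)$ fed by Corollary \ref{C3.1} to bound $\int_0^t\|\nabla u(\cdot,\tau)\|_{H^s}\,d\tau$ uniformly in $t$. Your closing remark correctly identifies the integrability of the time weight (hence $d\ge 3$) as the crux.
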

\begin{proof}
We first start with the $L^2$-estimate of $\rho$. Straightforward computation gives
\[
\frac12\frac{d}{dt}\|\rho\|_{L^2}^2 = -\intr\nabla\cdot (\rho u) \rho\,dx =\frac12 \intr u \cdot \nabla|\rho|^2\,dx \le \frac12\|\nabla\cdot u\|_{L^\infty}\|\rho\|_{L^2}^2.
\]
For $ 1\le k \le s$, we use Lemma \ref{lem_moser} to obtain
\begin{align*}
\frac12\frac{d}{dt}\|\nabla^k \rho\|_{L^2}^2 &= -\intr \nabla(\nabla^k \rho)\cdot u \ \nabla^k\rho\,dx -\intr \lt(\nabla^k (\nabla \rho \cdot u ) - \nabla (\nabla^k \rho)\cdot u)\rt)\nabla^k\rho\,dx\\
&\quad -\intr \rho \nabla^k (\nabla \cdot u) \nabla^k\rho\,dx -\intr \lt(\nabla^k(\rho \nabla \cdot u) -\rho\nabla^k(\nabla\cdot u)\rt)\nabla^k \rho\,dx\\
&\le \frac12\|\nabla u\|_{L^\infty}\|\nabla^k \rho\|_{L^2}^2 + C\|\nabla^k (\nabla \rho \cdot u ) - \nabla (\nabla^k \rho)\cdot u)\|_{L^2}\|\nabla^k \rho\|_{L^2}\cr
&\quad + \|\rho\|_{L^\infty}\|\nabla^{k+1} u\|_{L^2}\|\nabla^k \rho\|_{L^2} + C\|\nabla^k(\rho \nabla \cdot u) -\rho\nabla^k(\nabla\cdot u)\|_{L^2}\|\nabla^k \rho\|_{L^2}\cr
&\leq C\|\nabla u\|_{L^\infty}\|\nabla^k \rho\|_{L^2}^2 + C\|\nabla \rho\|_{L^\infty}\|\nabla^k u\|_{L^2}\|\nabla^k \rho\|_{L^2}+ \|\rho\|_{L^\infty}\|\nabla^{k+1} u\|_{L^2}\|\nabla^k \rho\|_{L^2}\cr
&\le C\|\nabla u\|_{H^s}\|\rho\|_{H^s}^2,
\end{align*}
where $C$ is independent of $T$ and we used
\[
\|\rho\|_{W^{1,\infty}} \lesssim  \|\rho\|_{H^{[d/2]+2}} \lesssim \|\rho\|_{H^{s - ([d/2] -1)}} \lesssim \|\rho\|_{H^s}
\]
due to Lemma \ref{lem_moser} (ii). 

Now, we gather all the results to yield
\[
\frac{d}{dt}\|\rho\|_{H^s}^2 \le C\|\nabla u\|_{H^s}\|\rho\|_{H^s}^2,
\]
where $C$ is independent of $T$. We use Gr\"onwall's lemma and Corollary \ref{C3.1} to obtain, for some $\alpha \in (1/2,3/4)$,
\[\begin{aligned}
\|\rho(\cdot,t)\|_{H^s} &\le \|\rho_0\|_{H^s}\exp\lt( C\int_0^t \|\nabla u(\cdot,\tau)\|_{H^s}\,d\tau\rt)\\
&\le \|\rho_0\|_{H^s}\exp\lt(C \lt(\int_0^t \|(1+\tau)^\alpha \nabla u(\cdot,\tau)\|_{H^s}^2\,d\tau\rt)^{1/2} \lt(\int_0^t (1+\tau)^{-2\alpha}\,d\tau\rt)^{1/2}\rt)\\
&\le \|\rho_0\|_{H^s} \exp\lt( C\lt(\sqrt{\mathfrak{X}_0(s)} + \|v_0\|_{L^1}\rt)\rt),
\end{aligned}\]
which gives the desired result.
\end{proof}
For the lower bound estimate of $\rho$, we define a backward characteristic flow $\eta = \eta(x,t)$ by
\[
\pa_s \eta(x,t) = u(\eta(x,s),s) \quad \mbox{with} \quad \eta(x,t) = x.
\]
Note that $\eta$ is well-defined due to the strong regularity on $u$.
\begin{lemma} There exists a constant $C>0$ independent of $T$ such that
\[
\rho(x,t) \geq \rho_0(\eta(x,0))\exp\lt(-C\lt(\sqrt{\mathfrak{X}_0(s)} + \|v_0\|_{L^1}\rt) \rt) > 0.
\]
\end{lemma}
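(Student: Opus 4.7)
The plan is to integrate the continuity equation along the backward characteristic $\eta$ to obtain an exponential representation of $\rho$, and then control the resulting time integral of $\nabla\cdot u$ uniformly in both $x$ and $t$ using the polynomially-weighted estimate of Corollary \ref{C3.1}. First, rewriting the continuity equation in non-conservative form as $\partial_s\rho + u\cdot\nabla\rho = -\rho\,\nabla\cdot u$ and evaluating along the characteristic, the chain rule together with $\partial_s\eta(x,s)=u(\eta(x,s),s)$ gives
\[
\frac{d}{ds}\rho(\eta(x,s),s) = -\rho(\eta(x,s),s)\,(\nabla\cdot u)(\eta(x,s),s).
\]
Integrating in $s$ from $0$ to $t$ and using the terminal condition $\eta(x,t)=x$ yields the representation
\[
\rho(x,t) = \rho_0(\eta(x,0))\,\exp\!\left(-\int_0^t (\nabla\cdot u)(\eta(x,r),r)\,dr\right),
\]
so the proof is reduced to bounding $\int_0^t\|\nabla u(\cdot,r)\|_{L^\infty}\,dr$ uniformly in $t$ by $C(\sqrt{\mathfrak X_0(s)}+\|v_0\|_{L^1})$.

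Next, since $s\ge 2[d/2]+1$, Lemma \ref{lem_moser}(ii) gives the Sobolev embedding $\|\nabla u\|_{L^\infty}\lesssim \|\nabla u\|_{H^s}$. Fix any $\alpha\in(1/2,3/4)$ and apply Cauchy--Schwarz in time to obtain
\[
\int_0^t \|\nabla u(\cdot,r)\|_{H^s}\,dr \le \left(\int_0^t (1+r)^{2\alpha}\|\nabla u(\cdot,r)\|_{H^s}^2\,dr\right)^{1/2}\left(\int_0^t (1+r)^{-2\alpha}\,dr\right)^{1/2}.
\]
The first factor is controlled by $C(\mathfrak X_0(s)+\|v_0\|_{L^1}^2)^{1/2}\le C(\sqrt{\mathfrak X_0(s)}+\|v_0\|_{L^1})$ thanks to Corollary \ref{C3.1} (which allows any weight in $(0,3/4)$), while the second integral is bounded uniformly in $t$ because $2\alpha>1$. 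This is precisely the weighted splitting argument already used at the end of Lemma \ref{C3.3}.

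Plugging this into the exponential representation gives the claimed bound, and strict positivity follows from hypothesis (i) of Theorem \ref{T2.1}, namely $\rho_0>0$ everywhere, together with the fact that the exponential factor is strictly positive and bounded below uniformly in $x$ and $t$. The only real subtlety is the interplay between the two requirements $2\alpha>1$ (for integrability of $(1+r)^{-2\alpha}$) and $\alpha<3/4$ (for applicability of Corollary \ref{C3.1}); the window $\alpha\in(1/2,3/4)$ is non-empty precisely because the polynomial decay rate provided by Proposition \ref{P3.1} is sufficiently strong in dimension $d\ge 3$, which is why the smallness $\mathfrak X(s;T)\ll 1$ and the $L^1$-control of $v_0$ are both essential for ruling out vacuum.
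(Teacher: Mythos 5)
Your proof is correct and follows essentially the same route as the paper: the exponential representation of $\rho$ along the backward characteristic, followed by the Sobolev embedding and the weighted Cauchy--Schwarz argument with $\alpha\in(1/2,3/4)$ via Corollary \ref{C3.1}, exactly as the paper does (borrowing the splitting from Lemma \ref{C3.3}). No gaps.
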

\begin{proof}It follows from the continuity equation in \eqref{A-1} that
\[
\pa_s \rho(\eta(x,s),s) = -(\nabla \cdot u)(\eta(x,s),s)\rho(\eta(x,s),s),
\]
and this gives
\[
\rho(x,t) = \rho_0(\eta(x,0))\exp\lt( -\int_0^t(\nabla \cdot u)(\eta(x,\tau),\tau) \,d\tau\rt).
\]
On the other hand, similarly as in the proof of Lemma \ref{C3.3}, we estimate
\[
\lt|\int_0^t(\nabla \cdot u)(\eta(x,\tau),\tau) \,d\tau\rt| \leq \int_0^t \|\nabla u(\cdot,\tau)\|_{L^\infty}\,d\tau \leq C\int_0^t \|\nabla u(\cdot,\tau)\|_{H^s}\,d\tau \leq C\lt(\sqrt{\mathfrak{X}_0(s)} + \|v_0\|_{L^1}\rt).
\]
Hence we have
\[
\rho(x,t) \geq \rho_0(\eta(x,0))\exp\lt(-C\lt(\sqrt{\mathfrak{X}_0(s)} + \|v_0\|_{L^1}\rt) \rt),
\]
where $C>0$ is independent of $T$.
\end{proof}
Finally, we combine all the previous results to prove Proposition \ref{P3.2}
\begin{proof}[Proof of Proposition \ref{P3.2}]
Lemma \ref{C3.3} directly implies
\bq\label{P3-2.1}
\sup_{0\le t\le T} \|\rho(\cdot,t)\|_{H^s}^2 \le C\|\rho_0\|_{H^s}^2 \le C\lt(\mathfrak{X}_0(s) + \|v_0\|_{L^1}^2\rt).
\eq
Next, Lemma \ref{L3.2}, Corollary \ref{C3.1}, and Lemma \ref{L3.5} yield
\bq\label{P3-2.2}
\begin{aligned}
\sup_{0\le t \le T} \|u(\cdot,t)\|_{H^{s+2}}^2 &\le \sup_{0\le t \le T}\lt( \|u(\cdot,t)\|_{L^2}^2 + \|\nabla^{s+2} u(\cdot,t)\|_{L^2}^2 + \|\nabla u(\cdot,t)\|_{H^s}^2\rt)\\
&\le \sup_{0\le t \le T}\lt( \|u(\cdot,t)\|_{L^2}^2 + \|\nabla^{s+2} u(\cdot,t)\|_{L^2}^2 + \|(1+t)^{1/2}\nabla u(\cdot,t)\|_{H^s}^2\rt)\\
&\le C\lt(\mathfrak{X}_0(s) + \|v_0\|_{L^1}^2\rt).
\end{aligned}
\eq
Similarly, we use Proposition \ref{P3.1}, Lemma \ref{L3.6}, and Corollary \ref{C3.1} to get
\bq\label{P3-2.3}
\sup_{0\le t\le T} \|v(\cdot,t)\|_{H^{s+1}}^2 \le C\lt(\mathfrak{X}_0(s) + \|v_0\|_{L^1}^2\rt).
\eq
Thus, we collect \eqref{P3-2.1}, \eqref{P3-2.2} and \eqref{P3-2.3} to conclude the desired result.
\end{proof}

Now, we proceed to the proof of Theorem \ref{T2.1} in the following two subsections.

\subsection{Global-in-time existence of classical solutions} Let us first take into account the global-in-time existence part in Theorem \ref{T2.1}. We choose a positive constant $\e_1\ll1$ sufficiently small so that it satisfies the required smallness condition in Lemma \ref{L3.2} and Lemma \ref{L3.3}. Then, assume that
\[
\mathfrak{X}_0(s) + \|v_0\|_{L^1}^2 \le \frac{\e_1^2}{2(1+C^*)},
\]
where $C^* > 0$ appeared in Proposition \ref{P3.2}. Then we set
\[
\mathcal{S} := \lt\{ T\ge 0 \ | \ \mathfrak{X}(s;T) <\e_1^2\rt\}. 
\]
By the local-in-time existence theorem in Theorem \ref{thm_local}, the set $\mathcal{S}$ is non-empty. Now, we argue by contradiction to show $\sup\mathcal{S} = \infty$. Assume that $T^*:= \sup\mathcal{S} <\infty$. Then we have
\[
\e_1^2 = \lim_{t \to T*-}\mathfrak{X}(s;t) \le C^*\lt(\mathfrak{X}_0(s) + \|v_0\|_{L^1}^2\rt)\le \frac{C^*}{2(1+C^*)}\e_1^2 \le \frac{\e_1^2}{2},
\]
which leads to a contradiction. This implies $T^* = \infty$, and hence the classical solution obtained in Theorem \ref{thm_local} globally exists.

\subsection{Large-time behavior of solutions} In this part, we provide the details on the large-time behavior estimate in Theorem \ref{T2.1}. We separately consider zeroth-order and higher-order estimates as follows.\\

\noindent $\diamond$ (Zeroth-order estimates)  Since we now have $\|\rho\|_{L^\infty(\R^d \times \R_+)} <\infty$, we can repeat the procedure in Proposition \ref{P3.1} so that the relation \eqref{large_time} actually holds for all $t\ge 0$. Thus we obtain 
\[
\intr \rho|u|^2(x,t)\,dx + \intr |v(x,t)|^2\,dx \le \frac{C}{(1+t)^\alpha} \quad \forall\, t \ge 0
\]
for every $\alpha \in (0,d/2)$. For the $L^2$-norm of $u$, we use the above decay estimate and Lemma \ref{L3.2} to find
\[
\frac{d}{dt}\|u\|_{L^2}^2 + \|u\|_{L^2}^2 \le 2\|v\|_{L^2}^2 \le  \frac{C}{(1+t)^\alpha}.
\]
Then applying the Gr\"onwall's lemma gives
\[
\|u(\cdot,t)\|_{L^2}^2 \leq \|u_0\|_{L^2}^2e^{-t} + Ce^{-t} \int_0^t \frac{e^s}{(1+s)^\alpha}\,ds.
\]
On the other hand, we estimate
$$\begin{aligned}
\int_0^t \frac{e^s}{(1+s)^\alpha}\,ds &= \int_0^{t/2} \frac{e^s}{(1+s)^\alpha}\,ds + \int_{t/2}^t \frac{e^s}{(1+s)^\alpha}\,ds\cr
&\leq \int_0^{t/2} e^s\,ds + \frac{1}{(1 + t/2)^\alpha}\int_{t/2}^t e^s\,ds\cr
&=e^{t/2} - 1 + \frac{e^t - e^{t/2}}{(1 + t/2)^\alpha}.
\end{aligned}$$
Thus we have
$$\begin{aligned}
\|u(\cdot,t)\|_{L^2}^2 &\leq \|u_0\|_{L^2}^2e^{-t} + C\lt(e^{-t/2} - e^{-t}  \rt) + C\frac{1 - e^{-t/2}}{(1 + t/2)^\alpha} \leq \frac{C}{(1+t)^\alpha} \quad \forall\,t \geq 0
\end{aligned}$$
for some $C>0$ independent of $t$. \newline

\noindent $\diamond$ (Higher-order estimates) If $d=3$, then Corollary \ref{C3.1} directly implies
\[
\|\nabla u(\cdot,t)\|_{H^s}^2 + \|v(\cdot,t)\|_{H^s}^2 \le \frac{C}{(1+t)^\alpha} \quad \forall \, t \ge 0
\]
for $\alpha \in (0,3/2)$. On the other hand, if $d \ge 4$, it is required to reinvestigate the arguments in Lemma \ref{L3.3}. In this case, we choose $r \in (0,d/4)$ and follow the argument in the proof of Lemma \ref{L3.3} to deduce
\begin{align*}
&\frac12\frac{d}{dt}\|(1+t)^r \nabla^k u\|_{L^2}^2 \cr
&\quad \le -(1-C\e_1) \|(1+t)^r \nabla^k u\|_{L^2}^2 + \|(1+t)^r \nabla^k u\|_{L^2} \|(1+t)^r \nabla^k v\|_{L^2} + \frac{r}{1+t} \|(1+t)^r\nabla^k u\|_{L^2}^2
\end{align*}
for $1\le k \le s+1$. In order to control the last term on the right hand side of the above inequality, we need to consider a large time. More precisely, if $t \ge T_d:= d/3-1$, then we get
\[
\frac{r}{1+t} < \frac{d}{4(1+T_d)} = \frac34.
\]
This yields
\begin{align*}
\frac12\frac{d}{dt}\|(1+t)^r \nabla^k u\|_{L^2}^2 &\le -(1/4-C\e_1) \|(1+t)^r \nabla^k u\|_{L^2}^2 + \|(1+t)^r \nabla^k u\|_{L^2} \|(1+t)^r \nabla^k v\|_{L^2}\\
&\le -(1/8-C\e_1) \|(1+t)^r \nabla^k u\|_{L^2}^2 + 2 \|(1+t)^r \nabla^k v\|_{L^2}^2\\
&\le -\frac{1}{16}\|(1+t)^r \nabla^k u\|_{L^2}^2 + 2 \|(1+t)^r \nabla^k v\|_{L^2}^2,
\end{align*}
where we used the smallness of solutions to have $1/8-C\e_1 > 1/16$. Thus, by integrating the above inequality over the time interval $[T_d,t]$, we obtain
\[
\begin{aligned}
\|&(1+t)^r \nabla^k u(\cdot,t)\|_{L^2}^2 + \int_{T_d}^t \|(1+\tau)^r \nabla^k u(\cdot,\tau)\|_{L^2}^2\,d\tau \\
&\le \|(1+\tau)^r \nabla^k u(\cdot,\tau)\|_{L^2}^2\bigg|_{\tau = T_d} + C \int_{T_d}^t  \|(1+\tau)^r \nabla^k v(\cdot,\tau)\|_{L^2}^2\,d\tau\\
&\le C\lt(\mathfrak{X}_0(s) + \|v_0\|_{L^1}^2\rt)+ C \int_{T_d}^t  \|(1+\tau)^r \nabla^k v(\cdot,\tau)\|_{L^2}^2\,d\tau.
\end{aligned}
\]
Similarly, we can also derive the following inequality by using the argument in the proof of Lemma \ref{L3.4}: 
\[
\begin{aligned}
\|&(1+t)^r \nabla^k v(\cdot,t)\|_{L^2}^2 + \int_{T_d}^t \|(1+\tau)^r \nabla^{k+1} v(\cdot,\tau)\|_{L^2}^2\,d\tau \\
& \le \|(1+\tau)^r \nabla^k v(\cdot,\tau)\|_{L^2}^2 \bigg|_{\tau = T_d}+ C\sum_{\ell=1}^k \lt(\int_{T_d}^t \|(1+\tau)^r \nabla^\ell u(\cdot,\tau)\|_{L^2}^2\,d\tau + \int_{T_d}^t \|(1+\tau)^r \nabla^\ell v(\cdot,\tau)\|_{L^2}^2\,d\tau\rt)
\end{aligned}
\]
for $t \ge T_d$, $1\le k \le s$ and  $r \in (0, d/4)$. On the other hand, Proposition \ref{P3.1} implies
\[
\int_{T_d}^t \|(1+\tau)^r \nabla v(\cdot,\tau)\|_{L^2}^2\,d\tau \le  \int_0^t \|(1+\tau)^r \nabla v(\cdot,\tau)\|_{L^2}^2\,d\tau\le C\lt(\mathfrak{X}_0(s) + \|v_0\|_{L^1}^2\rt)
\]
for $t \ge T_d$ and $r \in (0,d/4)$. We finally proceed by induction as Corollary \ref{C3.1} to get
\[\begin{aligned}
&\|(1+t)^r \nabla u(\cdot,t)\|_{H^s}^2 + \|(1+t)^r \nabla v(\cdot,t)\|_{H^{s-1}}^2 \\
& \quad + \int_{T_d}^t \|(1+\tau)^r \nabla u(\cdot,\tau)\|_{H^s}^2\,d\tau + \int_{T_d}^t \|(1+\tau)^r \nabla v(\cdot,\tau)\|_{H^s}^2\,d\tau\\
& \qquad \le C\lt(\mathfrak{X}_0(s) + \|v_0\|_{L^1}^2\rt)
\end{aligned}\]
for $r \in (0,d/4)$ and $t \ge T_d$. For $t \leq T_d$, the solutions $(u,v)$ are bounded in $\mc([0,T_d];H^{s+1}(\R^d)) \times \mc([0,T_d];H^s(\R^d))$, thus we can find a constant $C$ depending on $T_d$ such that 
\[
\|(1+t)^r \nabla u(\cdot,t)\|_{H^s}^2 + \|(1+t)^r \nabla v(\cdot,t)\|_{H^{s-1}}^2 \leq C\lt(\mathfrak{X}_0(s) + \|v_0\|_{L^1}^2\rt)
\]
for all $t \in [0,T_d]$. Combining the above two estimates completes the proof.

\section*{Acknowledgments}

The work of Y.-P. Choi was supported by NRF grant (No. 2017R1C1B2012918), POSCO Science Fellowship of POSCO TJ Park Foundation, and Yonsei University Research Fund of 2019-22-021. The work of J. Jung was supported by NRF grant (No. 2019R1A6A1A10073437).

\end{document}